\newtheorem{theorem}{Theorem}[section]
\newtheorem{hypothesis}{Hypothesis}
\newtheorem{corollary}[theorem]{Corollary}
\newtheorem{definition}[theorem]{Definition}
\newtheorem{lemma}[theorem]{Lemma}
\newtheorem{remark}[theorem]{Remark}
\numberwithin{equation}{section}
\newenvironment{Acknowledgment}%
{\begin{trivlist}\item[]\textbf{Acknowledgments }}{\end{trivlist}}
\makeatletter\@addtoreset{equation}{section}\makeatother
\DeclareMathOperator{\Fix}{Fix}
\DeclareMathOperator{\Ran}{Ran}
\DeclareMathOperator*{\argmin}{arg\,min}
\title{Truncation of contact defects in reaction-diffusion systems}
\author{Milen Ivanov\footnote{Institute of Mathematics and Informatics, Bulgarian Academy of Sciences},~ Bj\"orn Sandstede\footnote{Division of Applied Mathematics, Brown University} }
\begin{document}
\maketitle

\begin{abstract}
Contact defects are time-periodic patterns in one space dimension that resemble spatially homogeneous oscillations with an embedded defect in their core region. For theoretical and numerical purposes, it is important to understand whether these defects persist when the domain is truncated to large spatial intervals, supplemented by appropriate boundary conditions. The present work shows that truncated contact defects exist and are unique on sufficiently large spatial intervals.\\

\textbf{Keywords}: spatial dynamics, nonlinear waves, reaction-diffusion systems, defects, Lin's method. 
\end{abstract}

\section{Introduction}
\label{section:overview}

Solutions of reaction-diffusion systems exhibit a wide variety of patterns, which makes them ubiquitous in modeling chemical, biological and ecological models \citep{murray2007mathematical}. For example, Turing patterns are potential mechanism for the emergence of stripes and spots on animal coats \citep{turing1952chemical}. In chemistry, spontaneous pattern generation occurs in experiments of the Belousov--Zhabotinsky reaction \citep{yoneyama1995wavelength} and in numerical simulations of model systems, in which both rigidly-rotating spiral waves and spiral waves exhibiting one or more line defects have been observed (see Figure~\ref{fig:spiralwaves}).

\begin{figure}[t]
	\centering
	\begin{subfigure}[b]{0.2\textwidth}
		\centering
		\includegraphics[height=0.9\textwidth]{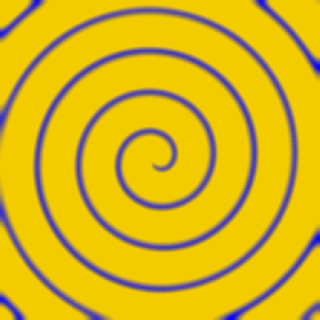}
		\caption{}\label{fig:spiral_wave}
	\end{subfigure}
	\begin{subfigure}[b]{0.2\textwidth}
		\centering
		\includegraphics[height=0.9\textwidth]{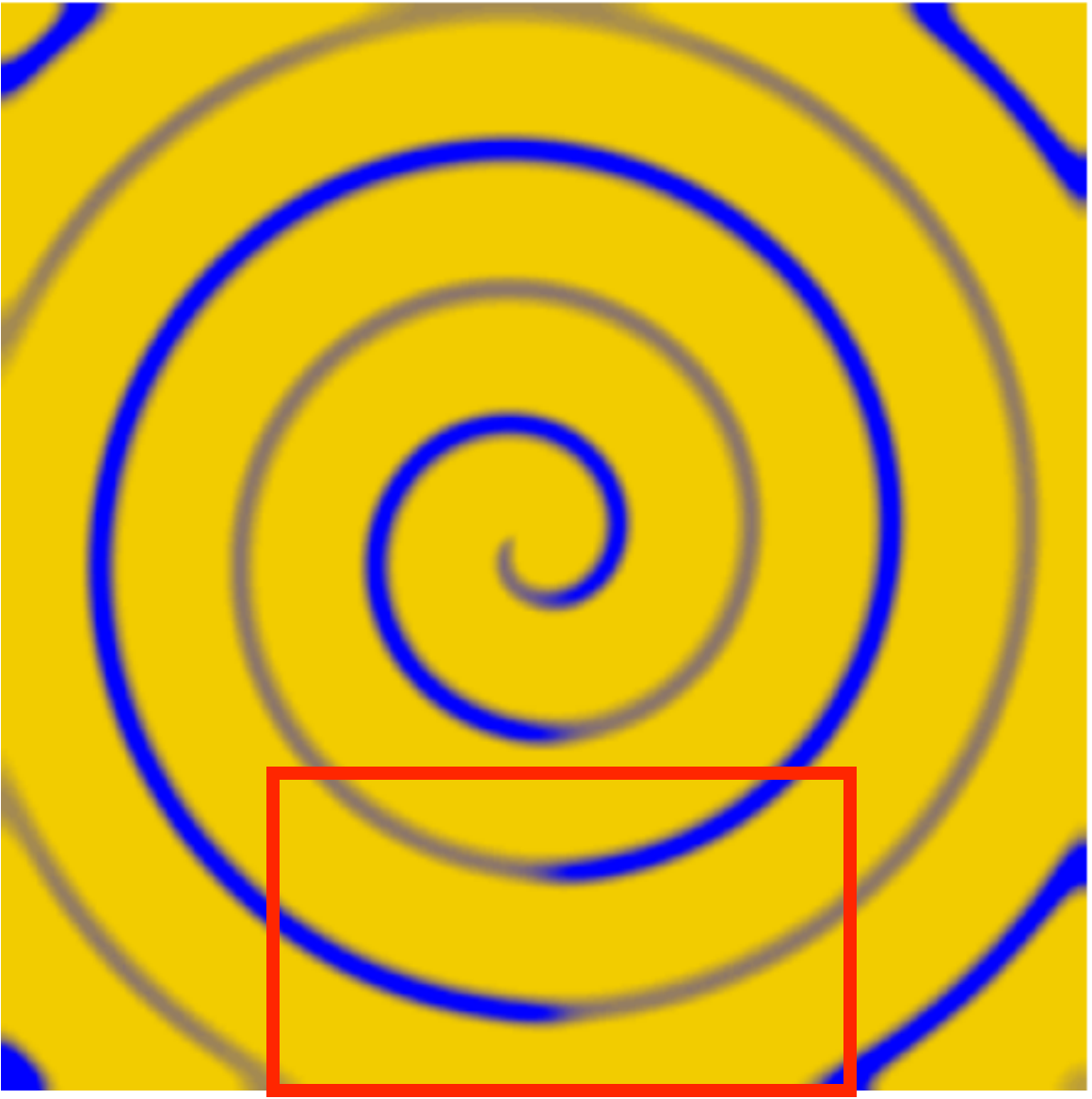}
		\caption{}\label{fig:spiral_defect}
	\end{subfigure}
	\begin{subfigure}[b]{0.2\textwidth}
		\centering
		\includegraphics[height=0.9\textwidth]{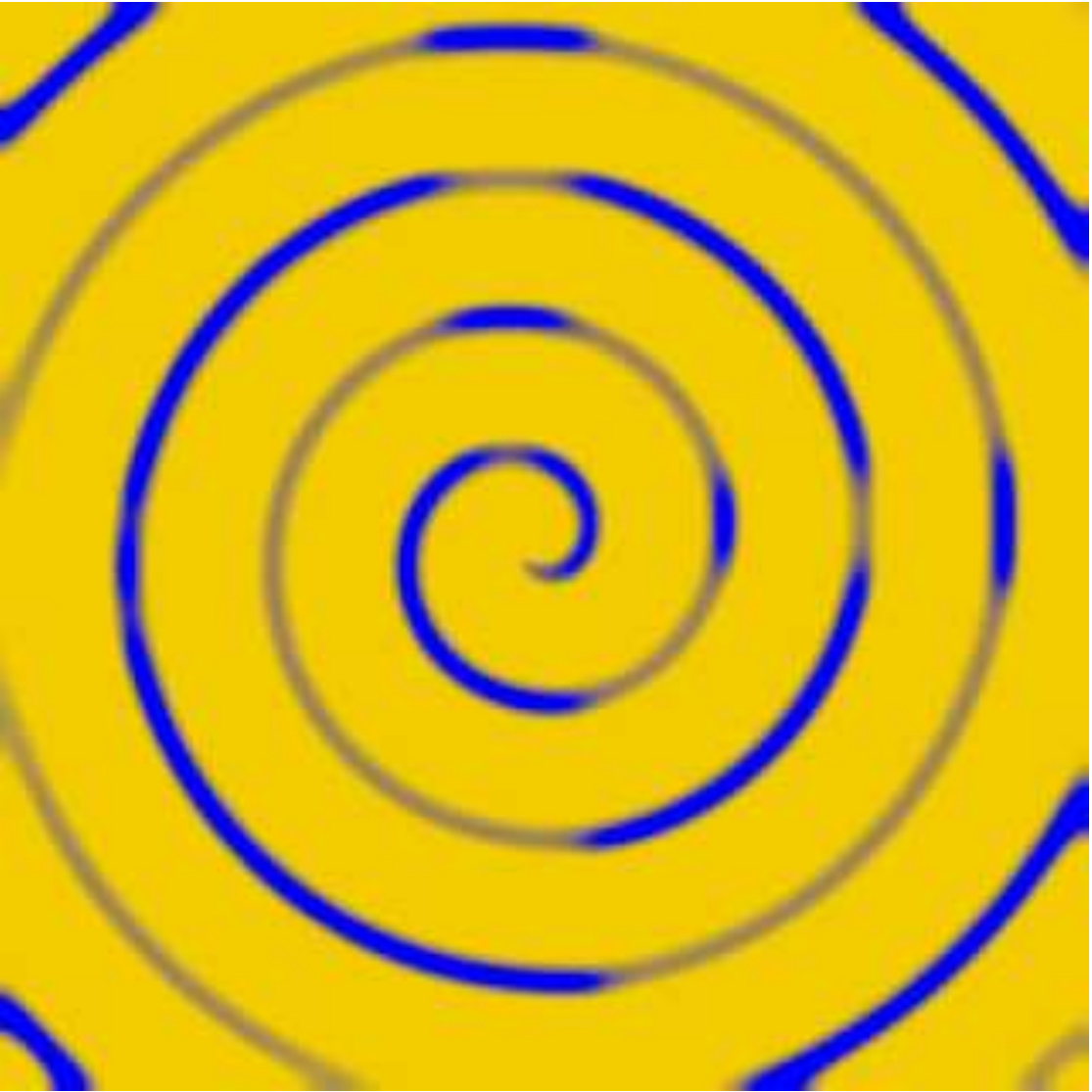}
		\caption{}\label{fig:spiral_defect_multiple}
	\end{subfigure}
	\begin{subfigure}[b]{0.3\textwidth}
		\centering  
		\includegraphics[height=0.6\textwidth]{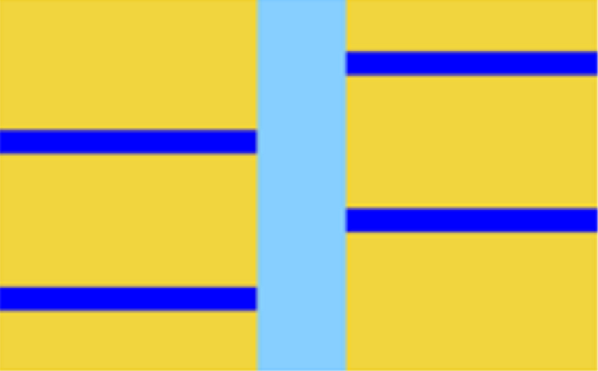} 
		\caption{}\label{fig:spiral_cut_abstracted1}
	\end{subfigure}
	\par
	\caption{Panel~(a) shows a snapshot of a rigidly-rotating planar spiral wave, while panels~(b) and~(c) contain snapshots of spiral waves that exhibit line defects (images taken from \citep{sandstede2007period}). Panel~(d) shows a space-time plot (space horizontal and time vertical) of a one-dimensional defect (shown in cyan) that mediates between two spatially homogeneous oscillations that are out of phase by half the period, thus representing one-dimensional versions of the line defects shown in panels~(b) and~(c): note the resemblance of this pattern with the pattern inside the red box shown in panel~(b).\label{fig:spiralwaves}}
\end{figure}

Our motivation comes from the line defects visible in the two center panels of Figure~\ref{fig:spiralwaves}, which are caused by the destabilization of a rigidly-rotating spiral wave through a period-doubling bifurcation \citep{sandstede2007period}: across the line defect, the phase of the spatio-temporal oscillations jumps by half a period. Over long time scales, these line defects attract and annihilate each other in pairs unless only one line defect is left: Figure~\ref{fig:spiral_defect_multiple} illustrates this behavior through the two pairs of co-located line defects that are about to merge and disappear. It is difficult to analyse these interaction properties between adjacent line defects in the full planar case, and we instead consider a simpler scenario in one space dimension that is more manageable. This scenario consists of a one-dimensional system that admits a point defect that mediates between two spatially homogeneous oscillations, whose phases jump by half their period across the defect: see Figure~\ref{fig:spiral_cut_abstracted1} for an illustration of the resulting space-time plots. A slightly different way to think about this scenario is to restrict the planar pattern with the line defect to the small red rectangle shown in Figure~\ref{fig:spiral_defect}: the resulting image resembles Figure~\ref{fig:spiral_cut_abstracted1}, and its time dynamics is similar.

In the one-dimensional case shown in Figure~\ref{fig:spiral_cut_abstracted1}, we could now concatenate several defects and attempt to understand their interaction properties. As a first step, we need to prove that we can actually truncate such a defect sitting at, say, $x=0$ from the entire line to a large bounded interval $(-L,L)$ supplemented by Neumann boundary conditions: once we know this, we can use reversibility or symmetry to create multiple copies by reflecting the truncated defect across $x=L$ or $x-L$. It is problem of establishing the existence of truncated defects on large intervals $(-L,L)$ that we will focus on in this paper. A different motivation for the same problem comes from validating numerical computations that are also conducted on bounded intervals rather than on the whole line. 

\subsection{Discussion of Defects}
\label{subsection:defect}
In this section, we will review the necessary definitions and results from the theory of one-dimensional defect patterns \citep{sandstede2004defects}. 
Consider the reaction-diffusion system 
\begin{equation}
	\label{eq:reaction_diffusion}
	u_t = D u_{xx} + f(u),~\mbox{ with }x\in \mathbb{R}, ~t\in \mathbb{R_+},~ u(x,t)\in 
	\mathbb{R}^d,~ f\in C^\infty (\mathbb{R}^d; \mathbb{R}^d),
\end{equation}
where $D$ is a constant, positive-definite diagonal matrix. Informally, \textit{defects} are  time-periodic solutions of  \eqref{eq:reaction_diffusion} that converge to spatio-temporally periodic structures as $x \to \pm \infty$. More formally, assume that $u_{wt}(k x - \omega t;k)$ is a family of solutions of \eqref{eq:reaction_diffusion} whose profiles are periodic in the first argument and parameterized by the \textit{wave number} $k$. These solutions are called \textit{wave trains}, and $ \omega$ is referred to as their \textit{frequency}. Typically, $\omega$ is uniquely determined by $k$ via the so-called \textit{nonlinear dispersion relation} $\omega = \omega_{nl}(k)$, and $u_{wt} = u_{wt}(k x - \omega_{nl}(k) t;k)$ is therefore a one-parameter family. Amongst the four types of generic defects, namely \textit{sources}, \textit{sinks}, \textit{transmission defects}, and \textit{contact defects} discussed in \citep{sandstede2004defects}, we focus here on \textit{contact defects}, which are typically symmetric under reflections in $x$ and resemble spatially homogeneous oscillations $u_{wt}(- \omega(0)t; 0)$ as $x \to \pm \infty$: they therefore reflect the pattern shown in Figure~\ref{fig:spiral_cut_abstracted1}. It will be useful to define $\omega_{nl}(0) =\colon \omega_d$ and use the rescaled time variable $\tau := \omega_d t$, so that the spatially homogeneous oscillations $u_{wt}(-\tau;0)$ are $2\pi$-periodic in $\tau$. With this notation, we can define contact defects more precisely.

\begin{definition}
	A function $u_d(x, \tau)$ is called a \textit{contact defect} with frequency $\omega_d$ if it is $2\pi-$periodic in $\tau$, satisfies the reaction-diffusion system
	\begin{equation}
		\label{eq:reaction_diffusion2}
		\omega_d u_\tau = D u_{xx} + f(u),~\mbox{ where }x\in \mathbb{R}, ~\tau\in \mathbb{R_+},~ u(x, \tau)\in 
		\mathbb{R}^d,~ f\in C^\infty (\mathbb{R}^d; \mathbb{R}^d),
	\end{equation}
	and, for some phase correcting functions $\theta_\pm(x)$ with $\theta_\pm'(x) \to 0$ as $x \to \pm \infty$, obeys
	\[
	u_d(x, \tau) - u_{wt}(- \tau  - \theta_\pm (x); 0)  \to 0  ~\mbox{ as } x \to \pm \infty.
	\]
	The convergence is assumed to be uniform in $\tau$ as $x \to \pm \infty$ for the functions and their first derivatives with respect to $x, t$.
\end{definition}

It is worth noting that the phase-correcting functions $\theta_\pm(x)$ will necessarily diverge logarithmically as $x \to \pm \infty$, see \citep[\S3.1]{sandstede2004evans}, which will pose difficulties later on as the phase of the defect does not converge to that of a single limiting wave train.

\begin{remark}
Contact defects were shown to exist in the complex cubic-quintic Ginzburg–Landau equation	\citep{sandstede2004defects}, and \citet[Theorem 17.17]{smoller1983shock} provided another existence result of contact defects as contact discontinuities.
\end{remark}

Our goal is to prove that contact defects persist under domain truncation to a sufficiently large interval $[-L,L]$ with suitable boundary conditions. 


\subsection{Main Results}

Before stating our persistence result, we reformulate the existence problem in terms of a spatial dynamical systems. We will state our hypotheses for the spatial dynamics problem rather than for the original reaction-diffusion system to keep the discussion concise and make it easier to connect the hypotheses more directly with the proofs in the later sections.

Since our focus is on time-periodic solutions, we proceed as in \citep{sandstede2004defects} and rewrite \eqref{eq:reaction_diffusion} as a first-order system 
\begin{align}
	\label{eq:spatial_dynamics}
	\begin{bmatrix}
		u_x \\ v_x
	\end{bmatrix} = \begin{bmatrix}
	v \\ - D^{-1}(-\omega u_\tau + f(u))
\end{bmatrix} =: G(u, v; \omega),
\end{align} 
with frequency $\omega$ near $\omega_d$, where the right-hand side is defined on the dense subspace $Y:= H^1(S^1) \times H^{1/2}(S^1)$ of $X:= H^{1/2}(S^1) \times L^2(S^1)$, and $S^1 := \mathbb{R}/2\pi \mathbb{Z}$ denotes the unit circle. In other words, we are  exchanging the evolution in time for evolution in the space variable $x$, hence the term "spatial dynamics". This method was pioneered by Kirchg\"assner \citep{kirchgassner1988nonlinearly, kirchgassner1981bifurcation} and Mielke \citep{mielke1996spatial}, see also \citep{sandstede2001structure, doelman2009dynamics, sandstede2004defects}. While the initial-value problem for \eqref{eq:spatial_dynamics} is ill-posed, many approaches from dynamical-systems theory, including invariant-manifold theory, continue to hold.

The system \eqref{eq:spatial_dynamics} is posed on Sobolev spaces on $S^1$, so there is a \textit{translation operator} $\mathcal{S}_\alpha :u(\tau) \to u(\tau + \alpha)$. The corresponding translation operator on $X$ will be denoted by $\mathcal{T}_\alpha =\mathcal{S}_\alpha \times \mathcal{S}_\alpha$. Given $B \subset X$, we will denote by $\Gamma(B) = \{\mathcal{T}_\alpha p|~ \alpha \in S^1, p \in B\}$ the union of the group orbits of the elements of $B$. 

We will use the notation  $\textbf{u}_d(x,\tau)  = (u_d, \partial_x u_d)$, and similar for $\textbf{u}_{wt}$. 
The wave train $u_{wt}(-\tau; 0)$, together with its $\tau$-translates, satisfies \eqref{eq:reaction_diffusion} when $\omega = \omega_d$, so $\textbf{u}_{wt}$ is an \textit{equilibrium} of \eqref{eq:spatial_dynamics}, and thus $\Gamma(\textbf{u}_{wt})$ is a \textit{circle of equilibria}. By definition, the contact defect $\textbf{u}_d(x, \tau)$ converges to $\Gamma(\textbf{u}_{wt})$ as $x\to\pm\infty$, and it is therefore a \textit{homoclinic orbit}. The circle of equilibria has center, stable, and unstable manifolds by \citep[Theorem~5.1]{sandstede2004defects}, and we use these to state our assumption that a contact defect exists.

\begin{hypothesis} \label{H1} 
Assume that $\textbf{u}_d(x, \cdot)\in W^{cs}(\Gamma(\textbf{u}_{wt}))$ satisfies \eqref{eq:spatial_dynamics} for $\omega=\omega_d = \omega_{nl}(0)$. We assume that  $\textbf{u}_d(x, \cdot) \not \in W^{ss}(\Gamma(\textbf{u}_{wt}))$. 
\end{hypothesis}

Our next hypothesis will be on the derivative $G_p(\textbf{u}_{wt} ; \omega_d)$\footnote{We will use the notation $G_p$ to denote the derivative of $G(u,v;\omega)$ with respect to $(u,v)$.}. One can readily check that $(\partial_\tau u_{wt}, 0)$ is an eigenvector and $(0, \partial_\tau u_{wt})$ a generalized eigenvector of the eigenvalue zero of $G_p(u_{wt},0 ; \omega_d)$. The eigenvector is generated by the $\mathcal{T}_\alpha$ symmetry by the circle group. We assume that there are no other eigenvalues, counted with multiplicity, on the imaginary axis so that $W^c(\Gamma(\textbf{u}_{wt}))$ has dimension two. 

\begin{hypothesis} \label{H2} 
We assume that zero is an eigenvalue of algebraic multiplicity two of $G_p(u_{wt},0 ; \omega_d)$ and that all other elements of the spectrum are bounded away from the imaginary axis. 
\end{hypothesis}

Besides $\tau$-symmetry, equation \eqref{eq:spatial_dynamics} has symmetry with respect to its evolution variable $x$. Recall that a \textit{reverser} of a dynamical system \citep{meiss2007differential} is a linear bounded involution such that $v(x):=\mathcal{R}\textbf{u}(-x)$ is a solution whenever $\textbf{u}(x)$ is a solution: alternatively, we can require that the reverser anti-commutes with the right-hand side of the dynamical system. The problem \eqref{eq:spatial_dynamics} has two reversers, namely the operators
\begin{align}
	\label{eq:reversers}
	\mathcal{R}_0:~(u,v)(\tau) &\to (u, -v)(\tau) \\
	\mathcal{R}_\pi\colon~(u,v)(\tau) &\to (u,-v)(\tau + \pi) = \mathcal{R}_0 \mathcal{T}_\pi (u,v).\nonumber
\end{align}

\begin{hypothesis}
	\label{H3}
	Assume that the defect $\textbf{u}_d$ is reversible, so that $\textbf{u}_d(0) \in \Fix \mathcal{R}$ where $\mathcal{R}$ is either $\mathcal{R}_0$ or $\mathcal{R}_\pi$.
\end{hypothesis}

By Hypothesis~\ref{H1}, we have $\textbf{u}_{d}(0) \in  W^{cs}(\Gamma(\textbf{u}_{wt}))$, and Hypothesis~\ref{H3}
implies that $\textbf{u}_{d}(0) \in  W^{cu}(\Gamma(\textbf{u}_{wt}))$, so that $W^{cs}$ and $W^{cu}$ intersect at the contact defect. By $\mathcal{T}_\alpha$ invariance, the intersection of these manifolds contains all time translates of the contact defect, and, generically, we do not expect it to contain anything else.

\begin{hypothesis}
	\label{H4} Assume that $W^{cs}(\Gamma(\textbf{u}_{wt}))$ and $W^{cu}(\Gamma(\textbf{u}_{wt}))$ intersect \textit{transversely} at $\textbf{u}_{d}(0)$, that is, the sum of their tangent spaces at each point $p \in \Gamma(\textbf{u}_{wt})$ is $X$. Our notation for transversality will be
	$W^{cs}(\Gamma(\textbf{u}_{wt})) \pitchfork W^{cu}(\Gamma(\textbf{u}_{wt}))$ at $\textbf{u}_{d}(0)$.
\end{hypothesis}

So far, our assumptions have been statements for the case $\omega = \omega_d$. 
When we change $\omega$, the circle of equilibria will disappear, and we will assume this is due to a non-degenerate saddle-node bifurcation.

\begin{hypothesis}
	\label{H5}
	We assume that the circle of equilibria undergoes a non-degenerate saddle-node bifurcation as we vary $\omega \approx \omega_d$.
\end{hypothesis} 

\begin{remark}
	\label{rk:cman}
	\citet[(8.15)]{doelman2009dynamics} show that, as we vary $\omega = \omega_d + \omega_*$, $\omega_* \approx 0$, the reduced vector field on the two-dimensional center manifold $W^c(\Gamma(\textbf{u}_{wt}))$ is of the form 
	\begin{align*}
		\alpha'(x) &= y,  \nonumber \\
		y'(x) &= -\frac{2\omega_*}{\lambda_{lin}''(0)} + \frac{\omega_{nl}''(0)}{\lambda_{lin}''(0)} y^2  + h.o.t.,
	\end{align*}
where $\alpha$ represents the coordinate given by time translation, $y$ is orthogonal to $\alpha$, $\lambda_{lin}$ is the linear dispersion relation, and $\omega_{nl}$ is the nonlinear dispersion relation. In particular, Hypothesis~\ref{H5} holds when $\lambda_{lin}''(0), \omega_{nl}''(0)$ are both nonzero.
\end{remark}

The following theorem is our main result.

\begin{theorem}{(Existence and uniqueness of truncated contact defects)} \label{theorem:existence}
Assume that Hypotheses~\ref{H1}-\ref{H5} hold, then there exist positive constants $\widehat{L}, C$ and a function $\epsilon_*:[\widehat{L}, \infty) \to (0, \infty)$ so that the following is true for each $L \geq \widehat{L}$. First, \eqref{eq:spatial_dynamics} with $\omega = \omega_d + \epsilon_*^2(L) $ has an $\mathcal{R}$-reversible  solution $\textbf{u}_L(x) = (u_L, u_L'):[-L;L] \to X$ that is uniformly at most $C/L^2$ away from $\Gamma(\textbf{u}_d(x))$ and satisfies the boundary conditions $\textbf{u}_L(\pm L)\in\Fix\mathcal{R}_0$. Furthermore, if $\textbf{u}_L$ and $\tilde{\textbf{u}}_L$ are two such solutions, then there exists an $\alpha \in S^1$ such that $\mathcal{T}_\alpha \textbf{u}_L(x) = \tilde{\textbf{u}}_L(x)$ for all $x$. Finally, the function $\epsilon_*(L)$ is $C^{2}$ and satisfies the estimates 
\begin{equation}
\label{eq:epsilon_dependency}
\epsilon_*(L) = \frac{2}{\pi L} +O\left(\frac{1}{L^{2}}\right), \qquad
\epsilon_*'(L) = \frac{-2}{\pi L^2} + O\left(\frac{1}{L^{3}}\right).
\end{equation}
\end{theorem}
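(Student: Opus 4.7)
The plan is Lin's method adapted to contact defects: exploit reversibility to reduce to a boundary-value problem on $[0,L]$, split perturbations via the invariant-manifold structure into center and hyperbolic components, solve the hyperbolic components by contraction, and integrate the resulting reduced equation on the two-dimensional center manifold explicitly using Remark~\ref{rk:cman}. By Hypothesis~\ref{H3}, any $\mathcal{R}$-reversible solution on $[-L,L]$ is determined by its restriction to $[0,L]$ together with the constraint $\textbf{u}_L(0)\in\Fix\mathcal{R}$, so combined with $\textbf{u}_L(L)\in\Fix\mathcal{R}_0$ at the right endpoint this becomes a two-point boundary-value problem on $[0,L]$ that I will solve by perturbing from $\textbf{u}_d|_{[0,L]}$ in a $\mathcal{T}_\alpha$-equivariant way.

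Near $\Gamma(\textbf{u}_{wt})$, the splitting $E^c\oplus E^{ss}\oplus E^{uu}$ provided by Hypotheses~\ref{H1}--\ref{H2} has two-dimensional center subspace and exponential dichotomies on the hyperbolic complements. Writing $\textbf{u}(x)=\textbf{u}_d(x)+V^c(x)+V^s(x)+V^u(x)$, the hyperbolic components $V^s,V^u$ can be inverted via variation-of-constants on exponentially weighted spaces using the spectral gap from Hypothesis~\ref{H2}; transversality from Hypothesis~\ref{H4} ensures uniqueness, and the resulting corrections are $O(\mathrm{e}^{-cL})$. The reversibility conditions at the endpoints are enforced by constraining the Lin jump space to be $\mathcal{R}$- and $\mathcal{R}_0$-compatible. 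What remains is a reduced problem on $W^c$ governed by the ODE of Remark~\ref{rk:cman}: $\alpha'=y$ and $y'=-2\omega_*/\lambda_{lin}''(0)+(\omega_{nl}''(0)/\lambda_{lin}''(0))y^2+\text{h.o.t.}$ Since the $y$-coordinate is transverse to the time-translation symmetry (which lies in $\Fix\mathcal{R}_0$), the reversibility conditions reduce at leading order to $y(0)=y(L)=0$.

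Setting $\omega_*=\epsilon^2$, the leading-order reduced ODE is integrable to $y(x)=\epsilon\kappa\tan(\epsilon\kappa x)$ for an explicit constant $\kappa=\kappa(\lambda_{lin}''(0),\omega_{nl}''(0))$, and $y(L)=0$ then forces $\epsilon\kappa L=\pi$, i.e., $\epsilon_*(L)\sim c_1/L$ at leading order; applying the implicit function theorem in $\epsilon$ to the full non-leading bifurcation equation upgrades this to the $C^2$ asymptotic \eqref{eq:epsilon_dependency}, with $\epsilon_*'(L)$ obtained by differentiation. The closeness estimate of at most $C/L^2$ to $\Gamma(\textbf{u}_d(x))$ follows from the contraction bounds combined with the $\omega_*\sim 1/L^2$ scaling, and uniqueness up to $\mathcal{T}_\alpha$ is a consequence of the implicit function theorem together with the $\mathcal{T}_\alpha$-equivariance of the entire construction.

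The main obstacle will be that $\textbf{u}_d$ decays only algebraically in the center direction, since the phase-correcting functions $\theta_\pm$ diverge logarithmically, so standard Lin setups using exponentially weighted spaces cannot be applied directly on $E^c$. I handle this by not perturbing around $\textbf{u}_d$ on the center at all, using instead the exact tangent-type solutions of the reduced ODE as the leading-order ansatz. A related subtlety is that $\omega_*=\epsilon_*^2(L)$ must pass through the saddle-node value $0$ as $L\to\infty$; on the correct-sign side the reduced flow has no equilibria and sweeps through its range in ``time'' $\pi/(\epsilon\kappa)=L$, which gives exactly the scaling $\epsilon_*\sim 1/L$ that the theorem claims.
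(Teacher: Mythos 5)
The overall strategy (Lin's method, reversibility reduction to $[0,L]$, exponential trichotomies, dynamics on a two-dimensional center manifold, and a travel-time/implicit-function step to solve for $\epsilon_*(L)$) matches the paper's. But there are two linked gaps in the way you treat the center component.

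First, the coordinates $(\alpha,y)$ on $W^c$ are \emph{local}: they are only defined in a small neighborhood of the circle of equilibria $\Gamma(\textbf{u}_{wt})$, which the defect reaches only as $x\to\pm\infty$. The core $\textbf{u}_d(0)$ is far from this circle, so the boundary condition $\textbf{u}_L(0)\in\Fix\mathcal{R}$ cannot be expressed as ``$y(0)=0$'' in center-manifold coordinates. You cannot work with the reduced ODE on the whole interval $[0,L]$. The paper handles this by splitting $[0,L]$ at a fixed intermediate $L_0$: on $[0,L_0]$ it propagates $\Fix\mathcal{R}$ forward along the defect using the exponential trichotomy of the linearization about $\textbf{u}_d$ (Lemma~\ref{lemma:pushforward_Fix}), while on $[L_0,L]$ it uses the fibration of $W^{cs}$ over $W^c$ and the reduced dynamics; these two pieces are then matched (\S\ref{subsec:8}). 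Your proposal ``not perturbing around $\textbf{u}_d$ on the center at all'' drops this matching and leaves no mechanism to enforce the condition at $x=0$.

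Second, as a consequence, your travel-time bookkeeping is off. On $[L_0,L]$ the orbit enters the center neighborhood near $y=-\delta_0$ (the $y$-coordinate of the defect's base point at $x=L_0$) and must reach $y=0$ at $x=L$ to meet $\Fix\mathcal{R}_0$. That is a half-traverse of the tangent curve, with travel time $\approx\pi/(2\epsilon)$; this is exactly what Lemma~\ref{lemma:hitting_time} and Theorem~\ref{theorem:solving_epsilon} compute. Your formula $\epsilon\kappa L=\pi$ corresponds to demanding $y(0)=y(L)=0$ with $y(x)=\epsilon\kappa\tan(\epsilon\kappa x)$, i.e., a full pass from one zero of $\tan$ to the next; but that solution leaves the center neighborhood (indeed blows up) at $x=\pi/(2\epsilon\kappa)$, so the reduced ODE no longer governs the dynamics there, and the formula does not describe any orbit of the truncated boundary value problem. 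The correct leading balance is $\epsilon L\approx\pi/2$, and the $C^2$ regularity of $\epsilon_*$ then hinges on the reduced vector field being even in $y$ (a consequence of reversibility, needed to kill the $\eta(\epsilon)\log\epsilon$ term in Lemma~\ref{lemma:hitting_time}); your sketch should flag this as the mechanism behind the claimed smoothness, since without evenness one only gets $C^{1+\beta}$ for $\beta<1$.
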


We note that if $\mathcal{R}=\mathcal{R}_0$, then the truncated contact defects $\textbf{u}_L(x,\tau)$ extend to smooth $2L$-periodic solutions of \eqref{eq:spatial_dynamics}, since $\mathcal{R}_0$-reversibility of $\textbf{u}_L(x,\tau)$ together with $\textbf{u}_L(\pm L,\tau)\in\Fix\mathcal{R}_0$ implies $\textbf{u}_L(L,\tau)=\textbf{u}_L(-L,\tau)$. This is not true if the contact defect $\textbf{u}_d(x,\tau)$ is $\mathcal{R}_\pi$-reversible.

In order to prove Theorem~\ref{theorem:existence}, we will need the following auxiliary result on passage times through non-degenerate saddle-node bifurcations, which may be of independent interest.

\begin{theorem}\label{theorem:solving_epsilon}
Consider the system
\begin{equation}\label{eq:saddle}
y'(x) = \epsilon^2+    y^2 + g(y, \epsilon^2), 
\end{equation}
with parameter $\omega = \epsilon^2 \geq 0$, where  $g$ is $C^r$ for some $r \geq 4$ in both arguments, and  $g(0,0) = g_y(0,0) = g_{yy}(0,0) = g_{\omega}(0,0) = g_{y\omega}(0,0)= 0$, then the following is true.
\begin{enumerate}
\item There exist positive constants $\epsilon_0,\delta_0$ and a function $T=T(\epsilon; \delta)$ defined for $\epsilon \in (0, \epsilon_0]$ and $\delta\in [\delta_0/2; 2\delta_0]$ such that the solution of \eqref{eq:saddle} with $y(0) = - \delta$ satisfies $y(T(\epsilon, \delta)) = 0$.
\item There exists an $L_0 > 0$ and a unique function $\epsilon_*(L;\delta): (L_0, \infty) \times [\delta_0/2, \delta_0] \to (0, \epsilon_0)$, such that whenever $L \geq L_0$,
\[
L = T(\epsilon_*(L; \delta); \delta) \quad \mbox{ for all }L \geq L_0.
\]
For each fixed $\beta \in [0,1)$, the function $\epsilon_*$ is $C^{1 + \beta}$ in both arguments, and there is a $ C^{1,\beta}$ function $Q(z; \delta)$ such that
		\begin{align}
			\label{eq:epsilon_estimate}
			\epsilon_*(L;\delta) &= \frac{2}{\pi  L} +Q(L^{-1};\delta) = \frac{2}{\pi 
				L} +O(L^{-\beta - 1}), \\
			\frac{d\epsilon_*}{d L}(L;\delta) &= \frac{-2 }{\pi L^2} - \frac{ Q_z(L^{-1}; \delta)}{L^2} = 
			\frac{-2 }{\pi  L^2}  +O(L^{-\beta - 2}), \nonumber
		\end{align} 
where the constant in the big-O term may blow up as $\beta \to 1$. 
\item 	If, in addition, $g(-y,\omega)= g(y,\omega)$,  then $Q(\epsilon, \delta) \in C^{r}$, and the above estimates hold with $\beta = 1$. 
\item Analogous statements hold for the problem $y(0) = 0, y(T(\epsilon; \delta)) = \delta$.
\end{enumerate}
\end{theorem}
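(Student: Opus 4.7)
The plan is to exploit the separability of the ODE and treat the whole problem as a perturbation of the exactly solvable model $y' = \epsilon^2 + y^2$. Integrating gives the exact formula
\[
T(\epsilon, \delta) = \int_{-\delta}^0 \frac{dy}{\epsilon^2 + y^2 + g(y, \epsilon^2)},
\]
which will be the main object of analysis. For part~(1), the vanishing derivative conditions imply $g(y, \epsilon^2) = O(|y|^3) + O(\epsilon^4)$, so the denominator is bounded below by $\tfrac{1}{2}(\epsilon^2 + y^2) > 0$ for $\epsilon \in (0, \epsilon_0]$ and $\delta \in [\delta_0/2, 2\delta_0]$ provided $\epsilon_0, \delta_0$ are chosen small enough. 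This gives existence of $T(\epsilon, \delta)$, its positivity, and its $C^r$-smoothness in $\epsilon > 0$ and $\delta$.

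For part~(2), the strategy is to split $T = T_{\mathrm{out}} + T_{\mathrm{in}}$ at a fixed cutoff $-c \in (-\delta_0/2, 0)$. The outer piece $T_{\mathrm{out}}(\epsilon, \delta) = \int_{-\delta}^{-c} (\,\cdots\,)^{-1}\,dy$ is jointly $C^r$ in $(\epsilon^2, \delta)$ because the integrand is bounded. For the inner piece, rescaling $y = \epsilon u$ yields
\[
T_{\mathrm{in}}(\epsilon) = \frac{1}{\epsilon}\int_{-c/\epsilon}^0 \frac{du}{1 + u^2 + r(\epsilon, u)}, \qquad r(\epsilon, u) := \epsilon^{-2} g(\epsilon u, \epsilon^2) = O(\epsilon|u|^3) + O(\epsilon^2),
\]
uniformly for $|u| \leq c/\epsilon$. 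Expanding $(1 + u^2 + r)^{-1}$ as a Neumann series in $r/(1+u^2)$ and integrating term by term, the leading term produces $\arctan(c/\epsilon)/\epsilon$, and the whole sum combines with $T_{\mathrm{out}}$ to give an expansion $T(\epsilon, \delta) = A/\epsilon + R(\epsilon, \delta)$ with explicit leading coefficient $A$ and a $C^{1+\beta}$ remainder $R$. Since $\partial_\epsilon T = -A/\epsilon^2 + O(1)$ is nonzero, the implicit function theorem inverts the relation $L = T(\epsilon, \delta)$ to produce the unique $\epsilon_*(L, \delta)$; substituting the ansatz $\epsilon_* = A/L + (\text{corr.})$ into this relation and iterating yields the asymptotic expansion \eqref{eq:epsilon_estimate}.

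The main obstacle will be controlling the regularity of the remainder $R$. The Neumann expansion produces integrals of the form $\int_{-c/\epsilon}^0 u^j(1+u^2)^{-k}\,du$, which are $C^\infty$ in $\epsilon$ when $j < 2k-1$ but pick up a $\log(1/\epsilon)$ term precisely when $j = 2k-1$. Such resonant contributions are generated by the odd powers of $y$ in the Taylor expansion of $g$ (starting from $y^3$), and they force $R \in C^{1+\beta}$ with a constant that blows up as $\beta \to 1$. Part~(3) then follows because the symmetry $g(-y,\omega) = g(y,\omega)$ eliminates all odd $u$-powers in the expansion, removes the $\log$ contributions, and restores full $C^r$ regularity in $1/L$. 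Finally, part~(4) follows from the change of variables $\tilde y(x) := -y(T - x)$, which preserves the form of the ODE with $g$ replaced by $\tilde g(\tilde y, \omega) := g(-\tilde y, \omega)$ (still satisfying the vanishing hypotheses) and maps $y(0)=0, y(T)=\delta$ to $\tilde y(0) = -\delta, \tilde y(T) = 0$, so all of the preceding reasoning applies verbatim.
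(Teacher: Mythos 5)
Your proposal is correct in spirit but takes a genuinely different route from the paper. The paper first applies the Ilyashenko--Yakovenko normal-form theorem to reduce \eqref{eq:saddle}, after a further shift, to the explicit cubic form $z'=\omega_*+z^2(1+a(\omega_*))+z^3 b(\omega_*)$, then substitutes $z=\epsilon/u$ and evaluates the resulting integral by partial fractions over the three roots of $u^3+u(1+a)+\epsilon b$. The $\log\epsilon$ term is then located in a single place (the contribution of the real root $u_1\sim -\epsilon b$), and evenness of $g$ forces $b\equiv 0$, which kills that term exactly. Your approach instead leaves the right-hand side untouched, writes $T$ as a separated integral $\int_{-\delta}^0 dy/(\epsilon^2+y^2+g)$, splits it at a fixed cutoff into outer and inner regions, rescales $y=\epsilon u$ in the inner region, and expands in a Neumann series in $r/(1+u^2)$, identifying the resonant integrals $\int u^{2k-1}(1+u^2)^{-k}\,du$ as the source of the $\log\epsilon$. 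This is a more elementary, matched-asymptotics style argument that avoids invoking a nontrivial normal-form theorem, which is a real advantage. The trade-off is that you have to sum an infinite Neumann series rather than deal with a cubic polynomial, so (i) you must check that the $O(c)$ bound on $r/(1+u^2)$ for $|u|\le c/\epsilon$ makes the series converge with a small enough cutoff $c$, (ii) you must control the series tail and the Taylor remainder of the merely $C^4$ function $g$ carefully when establishing the $C^{1+\beta}$ regularity of the remainder $R$, and (iii) the symmetric case (part~3) requires arguing that evenness of $g$ eliminates \emph{all} resonant contributions across \emph{every} order of the series, whereas in the paper it reduces to the single statement $b\equiv 0$. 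None of these is a gap in principle, but they are the places where your outline would need to be fleshed out to a full proof. Your reduction of part~(4) by $\tilde y(x):=-y(T-x)$ is exactly right and matches the paper's corresponding observation. One small imprecision: the bound $r=O(\epsilon|u|^3)+O(\epsilon^2)$ is correct as stated, but $r$ itself is \emph{not} uniformly small on $|u|\le c/\epsilon$; what is small (of order $c$) is $r/(1+u^2)$, and it would be worth saying that explicitly since it is the quantity that governs the Neumann series.
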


\subsection{Related work}

Theorem~\ref{theorem:existence} can be viewed as a result on the existence of periodic orbits with large periods near a given homoclinic orbit. Homoclinic bifurcations have been studied for many decades, and we refer to the survey \citep{HomburgSandstede} for references. Most results are for the case where the underlying equilibria are hyperbolic. Homoclinic bifurcations for nonhyperbolic equilibria have been considered for generic fold bifurcations, and we refer to \citep[\S5.1.10]{HomburgSandstede} for references. The case where homoclinic orbits approach a circle of equilibria with a two-dimensional center manifold was investigated first in the finite-dimensional case, and in fact for arbitrary Galerkin approximations of \eqref{eq:spatial_dynamics}, by the first author in \citep{ivanov2021truncation}. The proof in \citep{ivanov2021truncation} relies on the persistence of normally invariant manifolds for well-posed dynamical systems \citep{hirsch2006invariant, kuehn2015multiple}. Since similar results are not known for the infinite-dimensional ill-posed spatial dynamics problem considered here, we instead utilize Lin's method \citep{lin1990using} to prove Theorem~\ref{theorem:existence}.

Theorem~\ref{theorem:solving_epsilon} provides expansions of the travel from $y=-\delta$ to $y=0$ (and similarly from $y=\delta$ backwards in time to $y=0$) in the unfolding of a non-degenerate saddle-node bifurcation at $y=0$: our result shows that the travel times typically contain logarithmic terms $\log\epsilon$ are therefore not differentiable in $\epsilon$ regardless of how smooth the right-hand side is. In contrast, \citet{fontich2007general} considered the travel time from $y=-\delta$ to $y=\delta$ for the unfolding of a possibly degenerate saddle-node bifurcations: for analytic vector fields, they used the residue theorem to prove that the resulting travel times are analytic in $\epsilon$. These two results are reconciled by noting that the logarithmic terms in the travel times from $y=-\delta$ to $y=0$ and from $y=0$ to $y=\delta$ cancel, yielding a smooth expression for the travel times from $y=-\delta$ to $y=\delta$. We also note that we cannot assume analyticity since our results are needed for the vector field on a center manifold. Finally, we remark that  \citet{kuehn2008scaling} showed that travel times may exhibit many different scaling laws when the right-hand side depends only continuously on $\omega$.

The rest of the paper is organized as follows. In \S\ref{section:cman} we discuss the dynamics on the center manifold and prove Theorem~\ref{theorem:solving_epsilon}. Theorem~\ref{theorem:existence} is proved in \S\ref{section:proof_existence} using Lin's method, and we will provide  additional estimates on the truncated contact defect $u_L$ in \S\ref{sectiom:estimates}. We end with a brief discussion in \S\ref{section:conclusion}.

\section{Dynamics on the Center Manifold}
\label{section:cman}

Our goal in this section is to analyze the equations of the slow dynamics of a local equivariant center manifold near the circle of equilibria $\Gamma(\textbf{u}_{wt})$ using equivariant local coordinates $( \alpha, y)$. Henceforth, we will frequently use $\alpha$ as a coordinate of a two-dimensional center manifold, that corresponds to the drift along the group action $\mathcal{T}_\alpha$, and we will denote the coordinate, perpendicular to $\alpha$, by $y$. 

\citet[(8.15)]{doelman2009dynamics} show that, as we vary $\omega = \omega_d + \omega_*$, $\omega_* \approx 0$, the dynamics of \eqref{eq:spatial_dynamics} on the two-dimensional center manifold is of the form   
\begin{align*}
	\alpha'(x) &= y,  \nonumber \\
	y'(x) &= -\frac{2\omega_*}{\lambda_{lin}''(0)} + \frac{\omega_{nl}''(0)}{\lambda_{lin}''(0)} y^2  + O(|y|^3 +  |y\omega_*| + \omega_*^2);
\end{align*}
see Remark \ref{rk:cman}. 
In our situation, both reversers $\mathcal{R}=\mathcal{R}_0,\mathcal{R}_\pi$ act on the reversible local center manifold by $\mathcal{R}(\alpha, y) = (\alpha, -y)$, and the right-hand side of the $y$ equation is therefore even in $y$ for all sufficiently small $\omega_*$. Hence, up to rescaling by constant factors, we may assume the dynamics on the center manifold is 
\begin{align}
	\label{eq:center_manifold}
	\alpha'(x) &= y, \nonumber \\
	y'(x) &= \omega_* +  y^2  + g(y, \omega_*),
\end{align}
where $\alpha \in S^1$, $y \in [-2\delta_0, 2\delta_0]$ and $\omega_* \in [-\epsilon_0^2, \epsilon_0^2]$ (here $\delta_0, \epsilon_0$ are sufficiently small positive constants). We know $g(-y, \omega_*) = g(y, \omega_*)$, so in particular $g(y,\omega_*) = O(y^4 + \omega_*^2) $ and $g$ contains no $y, \omega_*y, y^3, \omega_*y^3$ terms in its Taylor expansion. In order to choose the value of $\omega_*$ in terms of the parameter $L$, we are going to need to need to study travel time in saddle-node bifurcations. We answer these questions in the next section, and we remark its results may be of independent interest.

\subsection{Passage Time Near Saddle Node Bifurcations}
\label{subsection:hitting_time}
The previous section shows that we need to study the dynamics of the saddle-node bifurcation $y'(x) = \omega_*+    y^2 + g(y, \omega_*)$, where $g(y, \omega_*) = O(y^3 + \omega_*^2)$ is a $C^4$ function and $g$ has other properties to be determined later. Whenever there are no equilibria (i.e. $\omega_* = \epsilon^2 > 0$), we want to answer the following questions:
\begin{enumerate}
	\item Given $\omega_*, \delta$, where $\delta \gg |\omega_*| > 0$ are sufficiently small, in what time does the solution of a saddle-node bifurcation travel between $y = 0$ and $y = \delta$? 
	\item Given a sufficiently large travel time $L > 1$ and a sufficiently small $\delta > 0$, can we find an $\omega_*$, such that the solution of a saddle-node bifurcation travels between $y=0$ and $y = \delta$ in time $L$? 
\end{enumerate}
We answer the first question in Lemma~\ref{lemma:hitting_time} and use it to answer the second question in Theorem~\ref{theorem:solving_epsilon}: 

The key result we need to prove Theorem~\ref{theorem:solving_epsilon} is Lemma~\ref{lemma:hitting_time} below, where we compute the time of flight from $0$ to $\delta$  in terms of  $\epsilon$. 

\begin{lemma} 
	\label{lemma:hitting_time}
	Consider the non-degenerate saddle-node bifurcation \eqref{eq:saddle} in the regime with no equilibria ($\epsilon > 0$) and with the same assumptions on $g$ as in Theorem~\ref{theorem:solving_epsilon}, then there exist numbers $\epsilon_0, \delta_0  > 0$ such that the following holds for all $\epsilon \in (0, \epsilon_0], \delta \in [\delta_0/2, 2\delta_0]$: 
	\begin{enumerate}
		\item There is an unique function $T_+(\epsilon, \delta)$, such that, the equation 
		\eqref{eq:saddle} with the initial condition $y(0) = 0$, satisfies $y(T_+(\epsilon, \delta)) = \delta$. We call this function $T_+$ the travel time between $0$ and $\delta$. 
		\item  There exist functions  $\eta(\epsilon) \in  C^r([0, \epsilon_0])$, $ \zeta(\epsilon, \delta) \in C^r([0, \epsilon_0] \times [\delta_0/2, 2\delta_0])$, such that $\eta(\epsilon) = O(\epsilon),  \zeta(\epsilon, \delta) = O(\epsilon)$ and
		\[
		\epsilon T_+(\epsilon, \delta) = \eta(\epsilon)\log \epsilon  + \frac{\pi}{2}  + \zeta(\epsilon, \delta). 
		\]
		In particular, $\epsilon T_+(\epsilon, \delta)$ is continuous up to $\epsilon = 0$, uniformly in $\delta$. 
		\item If, in addition, we assume $g(-y, \epsilon^2) = g(y, \epsilon^2)$ for all $y, \epsilon$, then the function $\eta(\epsilon)$ is identically zero, and then $\epsilon T_+(\epsilon, \delta) \in C^r([0, \epsilon_0] \times [\delta_0/2, 2\delta_0])$.  
	\end{enumerate}
\end{lemma}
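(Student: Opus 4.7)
The plan is to reduce the travel time to an explicit integral via separation of variables, then identify the $\log\epsilon$ singularity by Taylor expanding $g$ in $y$. Since $g(y,\epsilon^2) = O(y^3 + \epsilon^4)$ by the vanishing hypotheses on $g$, the right-hand side $\epsilon^2 + y^2 + g(y,\epsilon^2)$ is strictly positive on a neighbourhood of the origin; solutions with $y(0)=0$ are therefore strictly monotone and
$$T_+(\epsilon, \delta) = \int_0^\delta \frac{dy}{\epsilon^2 + y^2 + g(y,\epsilon^2)},$$
establishing part~(1). For part~(2), I would apply the geometric identity
$$\frac{1}{\epsilon^2 + y^2 + g} = \sum_{k=0}^{N-1} \frac{(-g)^k}{(\epsilon^2 + y^2)^{k+1}} + \frac{(-g)^N}{(\epsilon^2 + y^2)^N (\epsilon^2 + y^2 + g)}$$
for $N$ chosen large relative to $r$, and integrate term by term. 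The $k=0$ piece contributes $\arctan(\delta/\epsilon)/\epsilon = \pi/(2\epsilon) - \arctan(\epsilon/\delta)/\epsilon$, producing the constant $\pi/2$ in $\epsilon T_+$ together with a $C^\infty$ function of $(\epsilon,\delta)$ vanishing like $\epsilon$.

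Inside each $k \geq 1$ piece I would Taylor expand $g$ in $y$ at $y=0$ and reduce to elementary integrals $\int_0^\delta y^p / (\epsilon^2 + y^2)^q\, dy$. Under the substitution $y = \epsilon s$, these become $\epsilon^{p-2q+1}\int_0^{\delta/\epsilon} s^p/(1+s^2)^q\, ds$, and the explicit antiderivatives (obtained from $u = 1 + s^2$, $s^p = s \cdot s^{p-1}$, and successive reduction) show that a logarithm appears exactly when $p = 2q-1$ is odd, with coefficient a rational constant. Consequently the only $\log\epsilon$ terms come from odd-degree Taylor coefficients of $g$ in $y$; collected, they form $\eta(\epsilon)$ whose leading behaviour is $\epsilon \cdot g_{yyy}(0,\epsilon^2)/6 + O(\epsilon^3)$ arising from the $k=1,\,p=3,\,q=2$ contribution, while the remaining smooth pieces collect into $\zeta(\epsilon,\delta)$. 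Part~(3) is immediate: evenness of $g(\cdot,\omega)$ forces every odd-degree $y$-coefficient of $g$ to vanish identically in $\omega$, eliminating all $\log\epsilon$ contributions and forcing $\eta \equiv 0$. Part~(4) follows from the involution $y \mapsto -y$ of the phase line, which conjugates the two boundary-value problems.

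The main obstacle is the $C^r$ regularity of $\zeta$ and $\eta$ up to $\epsilon = 0$, since naive $\epsilon$-differentiation of the integrand is singular at $y = 0$. To handle this I would split the $y$-interval into an inner part $[0,\delta/2]$, where the rescaling $y = \epsilon s$ converts the remaining integrals to $\int_0^{\delta/(2\epsilon)} s^p/(1+s^2)^q\, ds$ with $q$ chosen large enough (by taking $N$ sufficiently large) that each additional $\epsilon$-differentiation still yields a uniformly convergent integral controlled by dominated convergence; and an outer part $[\delta/2,\delta]$ on which the integrand is manifestly $C^r$ in $(\epsilon,\delta)$ because the denominator remains bounded below. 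Taking $N \geq r$ simultaneously ensures that the $g^N$-remainder piece is $C^r$ in both arguments and that every $\log\epsilon$ contribution to the expansion has been accounted for in $\eta$.
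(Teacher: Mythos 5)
Your route is genuinely different from the paper's. The paper first invokes Ilyashenko's finitely smooth normal form theorem to replace $g$ wholesale by the cubic normal form $z' = \omega_* + z^2(1 + a(\omega_*)) + z^3 b(\omega_*)$, then multiplies the travel-time integral by $\epsilon$, substitutes $z = \epsilon/u$ to obtain $\epsilon T_+ = \int_{\epsilon/\tilde\delta}^{\infty} u\,du / [u^3 + u(1+a) + \epsilon b]$, and finishes by exact partial fractions of a cubic denominator with one real root $u_1 = -\epsilon b + O(\epsilon^2)$ and a complex-conjugate pair near $\pm i$; the logarithm in $\epsilon$ comes solely from the $A_1/(u - u_1)$ term, and $\eta(\epsilon) = -u_1(\epsilon)$ is automatically a single $C^r$ object. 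You instead work directly with $\int_0^\delta dy/(\epsilon^2 + y^2 + g)$, expand by the finite geometric identity in powers of $g/(\epsilon^2+y^2)$, and then Taylor-expand $g$ inside each piece, classifying the elementary integrals $\int_0^\delta y^p(\epsilon^2+y^2)^{-q}\,dy$ by whether $p = 2q-1$. This avoids the normal-form black box and is more elementary, which is attractive; the price is that $\eta$ is assembled from an infinite collection of contributions across all $k$ and $p$, so the claimed $C^r$ regularity of $\eta$ and $\zeta$ requires genuine bookkeeping that your outline only sketches. In particular, the leading piece $\epsilon\, g_{yyy}(0,\epsilon^2)/6$ a priori only inherits the regularity of $g_{yyy}(0,\cdot)$, namely $C^{r-3}$ in $\omega$; you need to argue either that the higher-$k$ contributions restore the claimed smoothness or that the Taylor-remainder pieces combine with the explicit ones to give a $C^r$ total. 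The inner/outer splitting with $N \geq r$ is the right idea for the $g^N$-remainder, but those inner integrals $\epsilon^{p-2q+1}\int_0^{\delta/(2\epsilon)} s^p (1+s^2)^{-q}\,ds$ do not obviously yield uniform control of $r$ many $\epsilon$-derivatives without further decomposition — this is where the paper's reduction to a cubic denominator really pays off. Two smaller points: your parenthetical reasoning for part (1) is correct and matches the paper; and you refer to a "Part (4)" that does not exist in this lemma — you have conflated it with part 4 of Theorem~\ref{theorem:solving_epsilon} (or the Corollary), though the involution $y \mapsto -y$ argument is indeed essentially what the paper uses there.
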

\begin{proof}
	The idea of the proof is to construct an appropriate normal form for a saddle-node bifurcation and then to use partial fractions to compute the travel time.
	
	We start with the computation of the normal form. By \citep[Theorem~5 and Corollary~1]{ilyashenko1991finitely},
	each saddle-node bifurcation has the normal form 
	\begin{equation}
		\label{eq:saddle_normal}
		\tilde{y}'(x) = (\tilde{\omega}_*+    \tilde{y}^2)(1 + b(\tilde{\omega}_*)\tilde{y}), 
	\end{equation}
	where $b = b(\tilde{\omega}_*)$ is a $C^k$ function. We are interested in the case, where there is no $\tilde{y}$ term, so we will do the substitution $\tilde{y} = z + \zeta \tilde{\omega}_* b(\tilde{\omega}_*)$, where $\zeta$ will be determined later. This yields
	\begin{align*}
		z' &= \tilde{\omega}_* + \tilde{\omega}_* b(z + \zeta \tilde{\omega}_* b) + (z + \zeta \tilde{\omega}_* b)^2 + (z + \zeta \tilde{\omega}_* b)^3 b \\
		&= \tilde{\omega}_* + \zeta \tilde{\omega}_*^2 b^2 + \zeta^2 \tilde{\omega}_*^2 b^2 + \zeta^3 \tilde{\omega}_*^3 b^4 + z(\tilde{\omega}_* b + 2 \zeta \tilde{\omega}_* b + 3 \zeta^2 \tilde{\omega}_*^2 b^3) + z^2(1 + 3 \zeta \tilde{\omega}_* b^2) + z^3 b. 
	\end{align*}
	Therefore, we will pick $\zeta \approx -1/2$, so that the $z$ coefficient is zero, or
	\[
	3\zeta^2 \tilde{\omega}_* b^2 + 2 \zeta + 1 = 0, \quad \zeta = -1/2 - 3\tilde{\omega}_* b^2/8 + O(\tilde{\omega}_*^2 b^4).
	\]
	By the Inverse Function Theorem, we can rename the $\tilde{\omega}_* + \zeta \tilde{\omega}_*^2 b^2 + \zeta^2 \tilde{\omega}_*^2 b^2 + \zeta^3 \tilde{\omega}_*^3 b^4 $ as $\omega_*$, $3 \zeta \tilde{\omega}_* b(\tilde{\omega}_*)$ as $a(\omega_*)$ and $b(\tilde{\omega}_*)$ as $b(\omega_*)$, so that the saddle-node bifurcation equation takes the normal form
	\begin{equation}
		\label{eq:saddle_node}
		z' = \omega_* + z^2(1 + a(\omega_*)) + z^3 b(\omega_*).	
	\end{equation}	
	Per our computation, \eqref{eq:saddle_node} is a normal form of \eqref{eq:saddle}, so in fact there is a change of variables $y = \Psi(z; \omega_*)$, such that $\Psi'(0;\omega_*) = 1$, which converts \eqref{eq:saddle} to \eqref{eq:saddle_node}. Also, let $\tilde{\delta}$ be such that $\Psi(\tilde{\delta}, \omega_*) = \delta$ (of course, $\tilde{\delta}$ depends smoothly on $\omega_*$, but we suppress this in our notation.) The travel time of \eqref{eq:saddle} from $0$ to $\delta$ will be the same as the travel time of \eqref{eq:saddle_node} from $0$ to $\tilde{\delta}$, namely 
\[
T_+(\epsilon, \delta) = \int_0 ^{\tilde{\delta}} \frac{1}{ dz/dx } dz = \int_0 ^{\tilde{\delta}} \frac{1}{\epsilon^2 + z^2(1 + a(\epsilon^2)) + z^3b(\epsilon^2)}dz.
\]
The idea of the proof is to analyze the above integral via partial fractions. One obstacle to this approach is the fact that $\epsilon^2$ is small and $b$ might be zero, which obstructs the partial fraction decomposition. 
To remedy this issue, we multiply the integral by $\epsilon$ and then substitute $z = \epsilon/u$: 
\begin{align*}
	\epsilon T_+(\epsilon, \delta) &= \int_{ \epsilon/\tilde{\delta}} ^{ \infty} \frac{\epsilon}{\epsilon^2 + \frac{\epsilon^2}{u^2}(1 + a) \frac{\epsilon^3}{u^3}b} \frac{1}{u^2} du \\ &= \int_1 ^{\infty} \frac{u}{u^3 + u( 1 + a) + \epsilon b} du + \int_{\epsilon/\tilde{\delta}}^1  \frac{u}{u^3 + u( 1 + a) + \epsilon b} du  =: I_1 + I_2.
\end{align*}
By the Dominated Convergence Theorem $\epsilon T_+(\epsilon, \delta)|_{\epsilon = 0} = \pi/2$ (here we use the assumption  that $a(0) = 0)$. Again by the Dominated Convergence Theorem, the integral $I_1$ is as smooth in $\epsilon$ as the functions $a(\epsilon^2), b(\epsilon^2)$. For $I_2$ we use partial fractions: let 
$u_{1,2,3}$ be the roots of $u^3 + u( 1 + a) + \epsilon b$, where $u_1 = - \epsilon b + 
O(\epsilon^2 b^2)$, $u_{2,3}= \pm i + O(\epsilon b)$ and $u_2 = \bar{u}_3$. Then, there exist complex numbers $A_1, A_2, A_3$, such that
\[
\frac{u}{u^3 + u( 1 + a) + \epsilon b} = \frac{A_1}{u-u_1} + \frac{A_2}{u-u_2} + \frac{A_3}{u-u_3}.
\]
We can find $A_j$, $j = 1, 2, 3$ by multiplying the above equation by $u - u_j$ and then substituting $u = u_j$. This yields
\[
A_1 = \frac{u_1}{(u_1 - u_2)(u_1 - u_3) }= \frac{u_1}{\frac{d}{du}(u^3 + u( 1 + a) + \epsilon b)|_{u = u_1} } = \frac{u_1}{3 u_1^2 + 1 + a},
\]
and similar for $A_2, A_3$, so that
\begin{equation}
	A_j = \frac{u_j}{3 u_j^2 + 1 + a(\epsilon^2)}, \quad j = 1,2,3. 
\end{equation}
We analyze the sum of $A_2/(u - u_2)$ and $A_3/u - u_3$, where we use $u_2 = \bar{u}_3, A_2 = \bar{A}_3$:
\begin{align*}
	\frac{A_2}{u - u_2} + 	\frac{A_3}{u - u_3} &= \frac{A_2(u - u_3) + A_3(u - u_2)}{(u - u_2)(u - u_3)} = \frac{2 u \Re(A_2) - 2\Re (A_2 u_3)}{(u - 2 \Re u_2 u + (\Re u_2)^2 + (\Im u_2)^2} \\
	&= \frac{1}{\Im u_2} \frac{\frac{u - \Re u_2}{\Im u_2} B(\epsilon) + C(\epsilon)}{(\frac{u - \Re u_2}{\Im u_2})^2 + 1},
\end{align*}
where $B, C$ are $C^r$ functions of $\epsilon$, which can be computed explicitly from $u_2, u_3$. Therefore, integrating from $\epsilon/\tilde{\delta}$ to $1$ yields
\begin{align*}
	\int_{\epsilon/\tilde{\delta}}^1 \frac{A_2}{u - u_2} + 	\frac{A_3}{u - u_3} du &= \int_{\epsilon/\tilde{\delta}}^1 \frac{1}{\Im u_2} \frac{\frac{u - \Re u_2}{\Im u_2} B(\epsilon) + C(\epsilon)}{(\frac{u - \Re u_2}{\Im u_2})^2 + 1} du \\
	&= \frac{1}{2}B(\epsilon) \log \left( \left(\frac{u - \Re u_2}{\Im u_2}\right)^2 + 1\right) + C(\epsilon) \arctan \left(\frac{u - \Re u_2}{\Im u_2}\right) \Bigg|^1_{\epsilon/\tilde{\delta}},
\end{align*}
which are $C^r-$smooth in $\epsilon, \delta$ up to $\epsilon = 0$ (note that $\Im u_2 = 1 + O(\epsilon),$ so the denominators do not blow up). Therefore, the smoothness properties of $I_2$ are determined by $\int A_1/(u - u_1)$. In the case when $b(\epsilon^2) \equiv 0$, $u_1(\epsilon) = 0$, so $A_1(\epsilon) = 0$ and this term vanishes: this proves the third part of the theorem. If $b(\epsilon^2) \not\equiv 0$,
\begin{equation}
	\label{eq:saddle_tbu1}
	\int_{\epsilon/\tilde{\delta}}^1 \frac{A_1}{u-u_1}du =  \frac{u_1 \log(1 + 
		u_1)}{3u_1^2 + 1 + a(\epsilon^2)} - \frac{u_1 \log(\epsilon(1/\tilde{\delta} - u_1/\epsilon))}{3u_1^2 + 
		1 + a(\epsilon^2)} = - u_1 \log \epsilon + R(\epsilon, \tilde{\delta}), 
\end{equation}
where $R$ is a $C^r$ function. Therefore, we proved that $\eta(\epsilon) = - u_1(\epsilon)$ and this finishes the proof.
\end{proof}

\begin{corollary}
	\begin{enumerate}
		\item Under the same assumptions as Lemma~\ref{lemma:hitting_time}, the travel time from $- \delta$ to $0$ is given by 
		\[
		\epsilon T_-(\epsilon, \delta) = - \eta(\epsilon) \log \epsilon + \frac{\pi}{2} + \zeta_-(\epsilon, \delta),
		\]
		where $\eta(\epsilon)$ is the same as in Lemma~\ref{lemma:hitting_time} and $\zeta_-$ satisfies the same smoothness assumptions as $\zeta$.
		\item The travel time  $T_+(\epsilon, \delta) + T_-(\epsilon, \delta)$ from $- \delta$ to $\delta$ satisfies the following:
		\[
			\epsilon [T_+(\epsilon, \delta) + T_-(\epsilon, \delta)] = {\pi} + \zeta(\epsilon, \delta) + \zeta_-(\epsilon, \delta). 
		\]
		In particular the right-hand side is smooth in $\epsilon$ as $\epsilon \to 0$, even without the extra assumption about $g$ being even in $y$. 
	\end{enumerate} 
\end{corollary}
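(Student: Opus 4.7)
The plan is to mirror the proof of Lemma~\ref{lemma:hitting_time} applied to the reflected problem and extract the sign change in the $\log\epsilon$ coefficient. Writing
\[
T_-(\epsilon,\delta) = \int_{-\delta}^{0} \frac{dy}{\epsilon^2 + y^2 + g(y,\epsilon^2)},
\]
I would first apply the normal-form change of variables $y = \Psi(z;\omega_*)$ from the proof of Lemma~\ref{lemma:hitting_time} (with $\omega_* = \epsilon^2$), defining $\tilde\delta_- > 0$ by $\Psi(-\tilde\delta_-; \omega_*) = -\delta$. This converts the integral to $\int_{-\tilde\delta_-}^{0} dz / (\omega_* + z^2(1+a) + z^3 b)$. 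The substitutions $w = -z$ followed by $w = \epsilon/u$, exactly as in the lemma, then produce
\[
\epsilon T_-(\epsilon,\delta) = \int_{\epsilon/\tilde\delta_-}^{\infty} \frac{u}{u^3 + u(1 + a(\epsilon^2)) - \epsilon b(\epsilon^2)}\, du,
\]
which is identical to the formula for $\epsilon T_+$ except that $\epsilon b$ is replaced by $-\epsilon b$ and $\tilde\delta$ by $\tilde\delta_-$.

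Next I would rerun the partial-fractions analysis of Lemma~\ref{lemma:hitting_time} verbatim on this new integrand. Writing $\tilde P(u) := u^3 + u(1+a) - \epsilon b$ and $P(u) := u^3 + u(1+a) + \epsilon b$, the algebraic identity $\tilde P(u) = -P(-u)$ forces the unique real root to be $\tilde u_1 = -u_1$, while the complex-conjugate pair satisfies $\tilde u_{2,3} = -u_{3,2}$ and contributes the same sort of $C^r$ logarithm-and-arctangent terms as in the lemma. Applying \eqref{eq:saddle_tbu1} with $u_1$ replaced by $\tilde u_1 = -u_1$ turns the logarithmic coefficient $-u_1 = \eta(\epsilon)$ into $-\tilde u_1 = u_1 = -\eta(\epsilon)$. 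Collecting the remaining $C^r$ contribution into a function $\zeta_-(\epsilon,\delta)$, which then satisfies the same smoothness and $O(\epsilon)$ estimates as $\zeta$, yields Part~1.

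Part~2 is then immediate: adding the two expansions of $\epsilon T_+$ and $\epsilon T_-$ makes the two $\eta(\epsilon)\log\epsilon$ terms cancel exactly, leaving
\[
\epsilon\bigl[T_+(\epsilon,\delta) + T_-(\epsilon,\delta)\bigr] = \pi + \zeta(\epsilon,\delta) + \zeta_-(\epsilon,\delta),
\]
whose right-hand side is $C^r$ in $(\epsilon,\delta)$ up to $\epsilon = 0$ without any symmetry assumption on $g$.

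The main obstacle I anticipate is the bookkeeping point that the sign flip on $\epsilon b$ must be \emph{exact} rather than merely leading-order: an $O(\epsilon\log\epsilon)$ residual would fail to be $C^1$ at $\epsilon = 0$ and would spoil the smoothness of $\zeta + \zeta_-$ claimed in Part~2. Performing the $w \mapsto -w$ substitution \emph{inside} the already-normalised equation, rather than independently renormalising the reflected ODE via Ilyashenko--Yakovenko, is what makes this cancellation manifest through the exact polynomial identity $\tilde P(u) = -P(-u)$.
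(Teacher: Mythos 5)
Your proposal is correct and follows essentially the same route as the paper's: the paper's proof likewise performs the reflection $z\to -z$, $\tau\to -t$ (so that the cubic coefficient $b$ in the normal form flips sign while $a$ is unchanged), observes that the real root of the partial-fraction cubic becomes $-u_1$, and concludes that the $\log\epsilon$ coefficient changes sign, with Part~2 following by addition. Your remark that the reflection should be done inside the already-normalized equation so the cancellation of the $\log\epsilon$ terms is exact (via $\tilde P(u) = -P(-u)$) rather than merely leading-order is a useful clarification of a step the paper leaves implicit, but it is a clarification of the same argument, not a different one.
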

\begin{proof}
	The first part follows from the proof of the lemma, with the substitution $z \to -z$, $\tau = -t$. This will have the net effect of reversing the sign of $b$, while keeping $a$. Therefore, for the partial fraction decomposition, we would be looking for the roots of $u^3 + u(1+a) - \epsilon b$, and the first root $u_1^-$ will be $-u_1$. Therefore, in \eqref{eq:saddle_tbu1} we would see $+u_1 \log \epsilon$ instead of $-u_1 \log \epsilon$. 
	
	The second part of this corollary follows directly when we add the two travel times.  
\end{proof}

Now we can prove Theorem~\ref{theorem:solving_epsilon}, i.e. we solve for $\epsilon$ as a function of the total travel time $T_+(\epsilon, \delta)$.  
\begin{proof}
The above corollary shows 
	\begin{equation}
		\label{eq:passage_pf1}
		L =  T_+(\epsilon,\delta) =\frac{1}{\epsilon} \left[\frac{\pi}{2} + \eta(\epsilon) \log \epsilon + 
		\zeta(\epsilon; \delta)\right] 
	\end{equation}, where $\eta(\epsilon) = O(\epsilon), \zeta(\epsilon, \delta) = O(\epsilon)$. Taking $d/d\epsilon$ shows there is an 
	$\epsilon_0$, such that for $ \epsilon \in  (0 , \epsilon_0],$ the right-hand side is decreasing in $\epsilon$, hence bijective. We expect $\epsilon \approx 
	\pi/2L$, so we solve for $\pi/2L$: 
	\[
	\frac{\pi}{2L} = \frac{\epsilon}{1 + \frac{\pi}{2}\eta(\epsilon)\epsilon \log \epsilon + 
		\frac{\pi}{2}\epsilon \zeta(\epsilon, \delta)} =: \frac{\epsilon}{1 + W(\epsilon, \delta)}.
	\]
	We note that for all $\beta \in [0,1)$ $W(\epsilon,\delta)$ is  $C^{1 + \beta}$ in both arguments, as $\eta(\epsilon)\epsilon \log \epsilon \in C^{1 + \alpha}([0, \epsilon_0])$, and $W$ would be $C^r$ in both arguments if the  $\eta(\epsilon)\epsilon \log \epsilon$ term did not exist (e.g. for $g$ even). With $z:= \pi/(2L)$, we have 
	\[
	z = \frac{\epsilon}{1 + W(\epsilon, \delta)},
	\]
	so by the Implicit Function Theorem there exist constants $\epsilon_0, \kappa_1, \kappa_2$, such that, if  $\epsilon \in (-2\epsilon_0, 2\epsilon_0), z \in (\kappa_1, \kappa_2)$, the equation 
	has an unique solution $\epsilon_*(z, \delta)$.  
	By implicit differentiation
	\[
	\partial_z \epsilon_*(z; \delta_0) = \frac{1}{\partial_\epsilon \frac{\epsilon}{1 + W(\epsilon, \delta)}} = \frac{1}{1 + O(\epsilon_*^\beta)} = 1 + 
	O(\epsilon_*^\beta) = 1 + O(z^\beta).
	\]
	In the specific case $\eta(\epsilon) = 0$, we would obtain  $\epsilon_*'(z) = 1 + 
	O(z)$. Integrating in $z$ and recalling gives us
	\[\epsilon_* = z + O(z^{\eta+1}), \mbox{ or } \epsilon_*(z) =z + O(z^2) \mbox{ 
		in the case } \eta(\epsilon) = 0.
	\]
	Substituting $z = \pi/2L$ and defining $Q$ to be the remaining term finishes the proof. 
\end{proof}

It is  worth commenting on the size of the solutions for large $x$. Below we derive some estimates for the solutions of \eqref{eq:saddle}
\begin{lemma}
	\label{lemma:saddle_node_estimates}
	Fix $\beta \in [0,1)$. When $\epsilon=0$,	 the solution of \eqref{eq:saddle} has an asymptotic 
	expansion $y(x) = - 1/ x + O(x^{1 + \beta})$ as $x \to \pm \infty$. Furthermore, if $g_{yyy}(0,0) = 0$, the expansion is $y(x) = - 1/ x + O(x^{2})$. 
	
\end{lemma}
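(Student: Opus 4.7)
The plan is to introduce the conjugate variable $u(x) := -1/y(x) - x$, which vanishes along the reference trajectory $y = -1/x$ of the truncated equation $y' = y^2$, and to reduce the problem to a scalar integration for $u$. A direct computation gives
\begin{equation*}
u'(x) = \frac{y'(x)}{y(x)^2} - 1 = \frac{g(y(x),0)}{y(x)^2},
\end{equation*}
and the Taylor expansion $g(y,0) = c\, y^3 + O(y^4)$ with $c := g_{yyy}(0,0)/6$ (the lower-order terms vanishing by the standing hypotheses on $g$) further yields $u'(x) = c\, y(x) + O(y(x)^2)$.

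The first preparatory step is to establish a rough a priori bound on $y$. Any solution with sufficiently small $|y(x_0)|$ and $y(x_0) < 0$ is globally defined for $x > x_0$ and satisfies $y(x) \to 0^-$ as $x \to +\infty$, since $y=0$ is a semi-stable equilibrium with $y' = y^2(1 + O(y)) > 0$ for small non-zero $y$; the branch $x \to -\infty$ is analogous via $x \mapsto -x$, $y \mapsto -y$. Setting $z(x) := -1/y(x)$ and using $z'(x) = 1 + O(|y|)$ together with $y \to 0$, integration shows $z(x) = x + o(x)$ as $|x| \to \infty$, giving the rough asymptotic $y(x) = -1/x + o(1/|x|)$ and in particular $|y(x)| \le K/|x|$ for $|x|$ large.

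Inserting the rough bound into $u'(x) = c\, y + O(y^2)$ produces $|u'(x)| = O(1/|x|)$, hence $u(x) = O(\log|x|)$; substituting back into $-1/y = x + u$ and expanding the reciprocal refines the rough bound to $y(x) = -1/x + O(\log|x|/x^{2})$. A second iteration now gives $u'(x) = -c/x + O(\log|x|/x^{2})$, which integrates to $u(x) = -c\log|x| + C + O(\log|x|/|x|)$ for some constant $C$, whence
\begin{equation*}
y(x) = -\frac{1}{x} + \frac{C - c\log|x|}{x^{2}} + O\!\left(\frac{(\log|x|)^{2}}{|x|^{3}}\right).
\end{equation*}
This is $-1/x + O(|x|^{-1-\beta})$ for every $\beta \in [0,1)$, because $\log|x|/|x|^{2} = O(|x|^{-1-\beta})$. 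When in addition $g_{yyy}(0,0) = 0$, we have $c = 0$ and $u'(x) = O(|x|^{-2})$, so $u$ converges to a finite limit $u_\infty$ at the rate $O(|x|^{-1})$, and substituting back yields $y(x) = -1/(x + u_\infty + O(|x|^{-1})) = -1/x + O(|x|^{-2})$, the sharpened bound.

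The main obstacle is the two-step bootstrap: the coarse bound alone does not control $u$ tightly enough to close the estimate, and only the second iteration uncovers the logarithmic contribution $-c\log|x|$. This logarithm is unavoidable whenever $g_{yyy}(0,0) \neq 0$, which is precisely what forces the restriction to $\beta < 1$ in the first statement and mirrors the $\epsilon\log\epsilon$ obstruction already encountered in Lemma~\ref{lemma:hitting_time}.
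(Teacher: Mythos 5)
Your proof is correct and follows essentially the same route as the paper: derive the rough bound $y(x)=O(1/x)$, then bootstrap by integrating the equation for $-1/y$ to identify the logarithmic correction (which disappears precisely when $g_{yyy}(0,0)=0$). Your conjugate-variable $u=-1/y-x$ formulation is simply a cleaner bookkeeping device for the paper's separation-of-variables step, and you correctly interpret the paper's typos $O(x^{1+\beta})$, $O(x^{2})$ as $O(|x|^{-1-\beta})$, $O(|x|^{-2})$.
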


\begin{proof}
	We can assume $y(0) < 0$, so that $y(x) \to 0$ as $x \to \infty$. The proof in the other case is the same. Assume that $\delta$ is so small that when $|y| \leq \delta,$ $|g(y,0)| \leq C |y|^3 \delta 
	\leq |y|^2/2$. Then $ y' \in [|y^2|/2, 3|y|^2/2]$; since solutions of $y' = 
	y^2/2$ and $y' = 3y^2/2$ are both $O(1/x)$ as $x \to \infty$ we see that the 
	solution of \eqref{eq:saddle} is $O(1/x)$. 
	
	Now use this estimate in \eqref{eq:saddle} to get
	\[y' = y^2(1 + 
	O(x^{-1})),
	\] and the remainder would be $O(x^{-2})$ if $g_{yyy}(0,0) = 0$. We can solve this by separation of variables to obtain
	\[
	y(x)= \frac{1}{-x + O(\log x) + O(1)},
	\]
	where the $ O(\log x)$ term would not be present if $f_{yyy}(0,0) = 0$. 
	We can add $1/x$ to this equation and  obtain \[y(x) +  \frac{1}{x} = \frac{O(\log x) + O(1)}{x(- x +  O(\log x) + O(1))},
	\]
	so the right hand-side is $O(x^{-(1 + \beta)})$ for all $\beta \in [0,1)$. When $g_{yyy}(0,0) = 0$ there would be no logarithmic term, so we would just obtain $O(x^{-2})$.  
\end{proof}

\section{Existence of Truncated Contact Defects}
\label{section:proof_existence}

The main goal of this section is to prove Theorem~\ref{theorem:existence}, namely that we can truncate a contact defect to a large, bounded interval. The main geometric configuration in the case $\omega = \omega_d$ is presented in Figure \ref{fig:torus}. The proof formalizes the idea that, when we perturb $\omega$, the circle of equilibria $\Gamma(\textbf{u}_{wt})$ will disappear, but the invariant torus will persist and consist of the time translates of the truncated defects $\textbf{u}_L$. At the end of the section, we will explain in what sense the truncated contact defect is close to the original one. 

\begin{figure}[t]
	\includegraphics[scale=0.45]{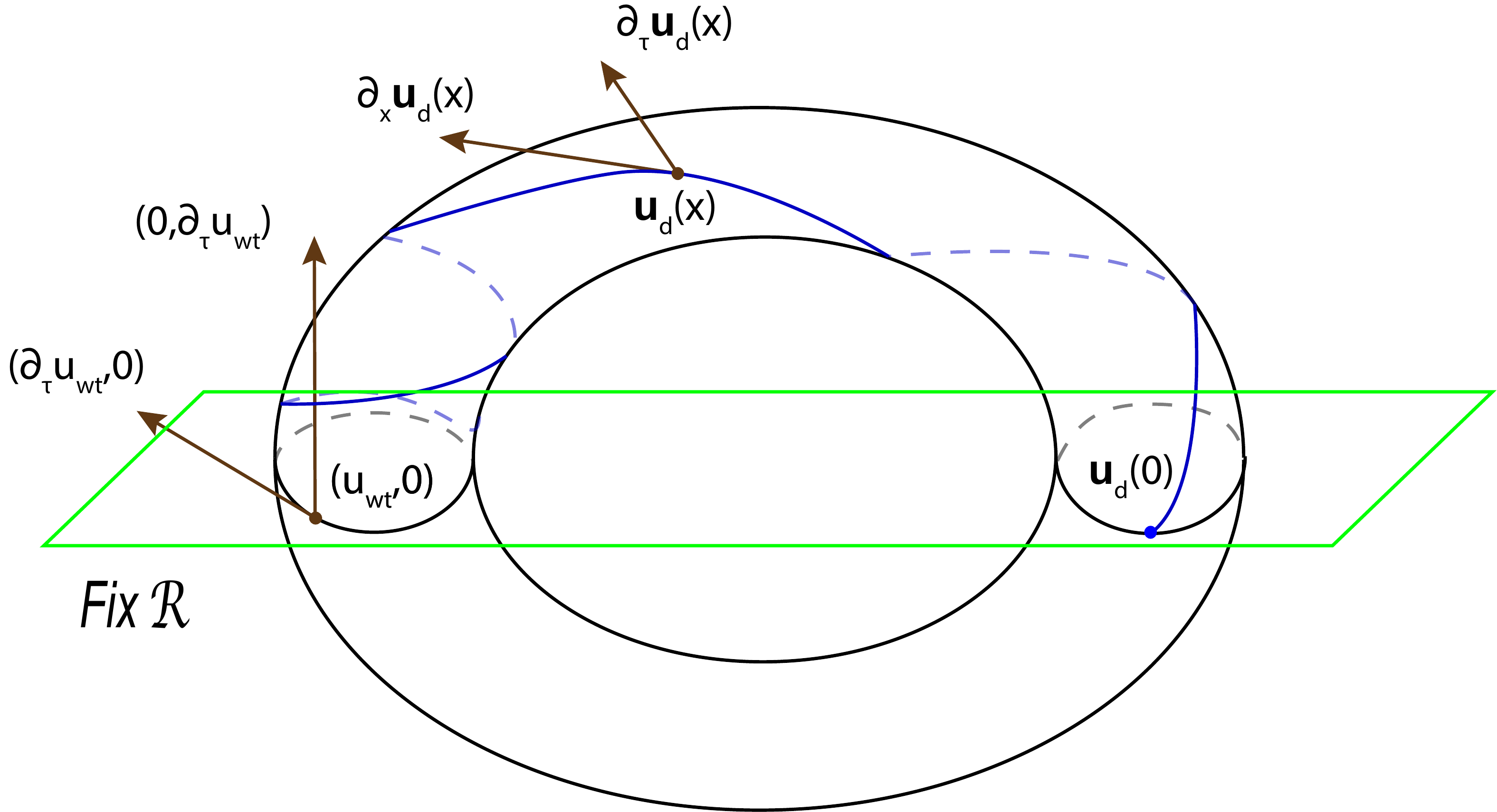}
	\centering
	\caption{The contact defect $\textbf{u}_d(x)$, in blue, converging to the circle of equilibria $\Gamma((u_{wt},0))$ for $x \geq 0$. For the sake of clarity, the analogous behavior as $x \to -\infty$ is not shown.}
	\label{fig:torus}
\end{figure}

\subsection{Exponential Trichotomies}
\label{section:trichotomies}

We first discuss exponential trichotomies of the linearization of \eqref{eq:spatial_dynamics} about the contact defect, which we will use to construct the truncated contact defects. Trichotomies allow us the decompose the underlying space into three complementary subspaces that contain, respectively, initial conditions of solutions that decay exponentially in forward time or backward time, or that grow only mildly. We note that Hypothesis~\ref{H2} shows that the linearization of \eqref{eq:spatial_dynamics} will have a two-dimensional center space, so we cannot expect that exponential dichotomies exist. The following theorem stating the existence of trichotomies was proved in \citep{sandstede2004defects}.

	\begin{theorem}
	\label{theorem:exponential_trichotomies}
	Assume  Hypotheses~\ref{H1}-\ref{H3}, then the linearization
\begin{align}
\label{eq:linearized_eq}
\begin{bmatrix}
u_x \\ v_x
\end{bmatrix} = \begin{bmatrix}
v \\ - D^{-1}(-\omega_d u_\tau + f'(u_d(x)))u
\end{bmatrix}
\end{align} 
of \eqref{eq:spatial_dynamics} about  $\textbf{u}_d(x)$ at $\omega = \omega_d$ has an exponential trichotomy on 
$\mathbb{R}$, that is, there exist strongly continuous families  $\{\Phi^{s}(\xi, \zeta)\}_{\xi, \zeta \in J, \xi \geq \zeta}$, $\{\Phi^{c}(\xi, \zeta)\}_{\xi, \zeta \in J, \xi \geq \zeta}$, $\{\Phi^{u}(\xi, \zeta)\}_{\xi, \zeta \in J, \xi \leq \zeta}$ of operators in $L(X)$ with the following properties:
\begin{enumerate}
\item $\Phi^j(\xi, \sigma)\Phi^j(\sigma, \zeta) = \Phi^j(\xi, \zeta)$ for $j = s, c, u$ and $\Phi^s(\xi, \xi) + \Phi^c(\xi, \xi) + \Phi^u(\xi, \xi) = 1$. 
\item There exist constants $C, \kappa > 0$, such that  
	\[ \|\Phi^s(\xi, \zeta) \| + \|\Phi^u( \zeta, \xi) \| \leq C \exp ( - \kappa|\xi - \zeta|) \]
	for all $\xi, \zeta$. Given $\eta \in (0, \kappa)$, there exists a constant $C(\eta)$, such that 
	\[
	\|\Phi^c(\xi, \zeta) \| \leq C(\eta)\exp (\eta |\xi - \zeta|).
\]
\item $\Phi^s(\xi, \zeta)\textbf{u}_0 $ and $\Phi^c(\xi, \zeta)\textbf{u}_0 $  satisfy \eqref{eq:linearized_eq} for $\xi > \zeta $ and  $\Phi^u(\xi, \zeta)$ satisfies \eqref{eq:linearized_eq} for
$\xi < \zeta$  whenever $\textbf{u}_0 \in Y, \xi, \eta \in J$. 
\end{enumerate}
\end{theorem}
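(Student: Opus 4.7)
The plan is to construct the trichotomy in two stages: first on the two asymptotic half-lines where the coefficient of \eqref{eq:linearized_eq} is a small perturbation of an autonomous limit, and then glue the pieces across the compact region $[-M,M]$ using the evolution of \eqref{eq:linearized_eq} itself.

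For the first stage I analyze the autonomous limit $p_x = A p$ with $A := G_p(\textbf{u}_{wt}; \omega_d)$. By Hypothesis~\ref{H2}, $\sigma(A)$ consists of a double eigenvalue at $0$ together with a hyperbolic remainder bounded uniformly away from $i\mathbb{R}$. The Riesz projections onto the two-dimensional generalized kernel and onto the hyperbolic complement supply an exponential trichotomy for the autonomous equation $p_x = Ap$ on $\mathbb{R}$, with rate $\kappa > 0$ determined by the spectral gap and arbitrarily small loss $\eta$ on the two-dimensional center. Since $\textbf{u}_d(x) \to \Gamma(\textbf{u}_{wt})$ uniformly in $\tau$ together with $x$-derivatives, the non-autonomous coefficient differs from $A$ (after conjugating by $\mathcal{T}_{\theta_\pm(x)}$) by an error that vanishes as $x \to \pm\infty$. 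The roughness theorem for exponential trichotomies in the ill-posed PDE setting, as developed and used in \citep{sandstede2004defects}, then produces trichotomies $\Phi^{s/c/u}_\pm$ on the two half-lines $[M,\infty)$ and $(-\infty,-M]$ for $M$ sufficiently large, inheriting the rates $\kappa$ and $\eta$.

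The second stage is to extend to $\mathbb{R}$. I define $E^s(\xi)$ to be the set of initial data at $\xi$ whose forward evolution under \eqref{eq:linearized_eq} exists on $[\xi,\infty)$ and decays exponentially, $E^u(\xi)$ analogously for backward evolution, and $E^c(\xi)$ to be the two-dimensional tangent space to the center manifold of $\Gamma(\textbf{u}_{wt})$ transported along the defect. On the half-lines these coincide with the roughness-built ranges of $\Phi^{s/c/u}_\pm(\xi,\xi)$, and on $[-M,M]$ the bounded evolution of \eqref{eq:linearized_eq} transports them consistently between the endpoints. The hard part is verifying that $E^s(\xi) \oplus E^c(\xi) \oplus E^u(\xi) = X$ for every $\xi$, in particular that the subspaces inherited from the $+\infty$ end match up with those coming from the $-\infty$ end. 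This uses Hypothesis~\ref{H1} (the defect lies in $W^{cs}$ but not $W^{ss}$, so its center component is genuinely two-dimensional) together with Hypothesis~\ref{H3} (reversibility forces $\textbf{u}_d \in W^{cu}$ as well), which together pin down $E^c(\xi)$ as the two-dimensional intersection $T_{\textbf{u}_d(\xi)} W^{cs} \cap T_{\textbf{u}_d(\xi)} W^{cu}$ along the orbit. Once complementarity is established, the operators $\Phi^s(\xi,\zeta)$, $\Phi^c(\xi,\zeta)$, $\Phi^u(\xi,\zeta)$ are defined by the evolution of \eqref{eq:linearized_eq} on the corresponding subspaces, and the semigroup property, exponential bounds, and strong continuity are inherited from the half-line trichotomies together with uniform continuity of the evolution on the compact interval $[-M,M]$.
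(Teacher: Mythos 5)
The paper does not prove this theorem: it states explicitly that Theorem~\ref{theorem:exponential_trichotomies} ``was proved in \citep{sandstede2004defects}'' and simply cites that reference, adding only the remark that reversibility (Hypothesis~\ref{H3}) is what upgrades the half-line trichotomies to one on all of $\mathbb{R}$. So you are attempting to reconstruct a proof the paper itself defers, and your outline of the two asymptotic half-lines plus gluing is the natural one. Two points in your sketch deserve more care, though.

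First, the roughness step is not entirely routine here: the convergence of the coefficient of \eqref{eq:linearized_eq} to the autonomous limit is only algebraic (the contact defect approaches $\Gamma(\textbf{u}_{wt})$ like $1/x$, with a logarithmically divergent phase $\theta_\pm(x)$), and you conjugate by a time-dependent $\mathcal{T}_{\theta_\pm(x)}$. Roughness for \emph{trichotomies} is also more fragile than for dichotomies because the center directions can leak into the hyperbolic ones under perturbation; this is precisely the technical content of \citep[Theorem~5.1]{sandstede2004defects}, and simply invoking ``roughness'' glosses over the hard part of the construction rather than proving it.

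Second, and more substantively, your complementarity argument $E^s(\xi)\oplus E^c(\xi)\oplus E^u(\xi)=X$ implicitly uses more than Hypotheses~\ref{H1}--\ref{H3}. Identifying $E^c(\xi)$ with $T_{\textbf{u}_d(\xi)}W^{cs}\cap T_{\textbf{u}_d(\xi)}W^{cu}$ and asserting that the remaining pieces fill out $X$ is exactly the content of Hypothesis~\ref{H4} (transversality of $W^{cs}$ and $W^{cu}$), which is \emph{not} among the assumptions of this theorem. Hypothesis~\ref{H1} tells you the defect is not in $W^{ss}$, so $\textbf{u}_d'$ is a genuine center direction, and Hypothesis~\ref{H3} gives $\mathcal{R}E^s(0)=E^u(0)$; but neither rules out a nontrivial intersection $E^s(0)\cap E^u(0)$, which is the actual obstruction to a trichotomy on the whole line. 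You would need to extract the complementarity from the reversibility structure directly (as the paper's remark suggests is the role of Hypothesis~\ref{H3}), not from a transversality you do not have; as written, the argument quietly assumes the conclusion of Hypothesis~\ref{H4}.
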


We need reversibility (Hypothesis~\ref{H3}) to ensure the exponential trichotomies are defined on $\mathbb{R}$, otherwise we would only have exponential trichotomies on $\mathbb{R}^\pm$.  

The key feature of exponential trichotomies is roughness (see \citep[\S4]{coppel2006dichotomies} and \citep[Theorem~7.6.10]{henry2006geometric}), that is, sensitivity to perturbations of  \eqref{eq:linearized_eq}. Loosely speaking, exponential trichotomies persist when we perturb the solution $\textbf{u}_d$ we linearize about.

\subsection{Description of the center and center-stable manifolds}
\label{subsec:1}

In this section we describe the center-stable and center-unstable manifolds near $\textbf{u}_{wt}$. 

It is shown in \citep[Theorem 5.1]{sandstede2004defects} that in a neighborhood of the wave train $\Gamma{\textbf{u}_{wt}}$, \eqref{eq:linearized_eq} exhibits  center, center-stable manifolds $W^{cs}(\omega_*), W^c(\omega_*)$, smooth in the parameter $\omega_*$, and equivariant with respect to translation in $\tau$ ($\mathcal{T}_\alpha$). By Hypothesis~\ref{H2}, the center space has dimension two, and is spanned by $\partial_\tau \textbf{u}_{wt}, \partial_x \textbf{u}_{wt}$. Find a sufficiently small $\delta_0 > 0$, such that we can parameterize $W^c$ by local coordinates $\alpha \in S^1, y \in (-2 \delta_0, 2 \delta_0)$. 

In addition, $W^{cs}$ is parameterized by $W^c$ and strong-stable fibers $\mathcal{F}^{ss}(p, b^s, \omega_*)$, $p \in W^c$. We can find $\delta_1 > 0$, such that this fibration is valid for $|b| < \delta_1$, uniformly in $p = (\alpha,z) \in S^1\times(-2\delta_0, 2\delta_0)$. When $\omega_* = 0$, the contact defect $\textbf{u}_d(x) \in W^{cs}$  by Hypothesis~\ref{H1}, so we can find $L_0 \gg 1$, such that $u_d(L_0)$ belongs to a fiber of the point $p = (0, -\delta_0)\in W^c(0)$. The fibration in the case $\omega_* = 0$ can be seen on Figure \ref{fig:center_manifold}.

\subsection{Flow on the center manifold}
\label{subsec:2}
In this section we will describe the flow on $W^c(\omega_*)$, given by the equivariant coordinates $(\alpha, y)$. By \citep{sandstede2004evans}, the dynamics are given by \eqref{eq:center_manifold}. The normal form for the saddle-node bifurcation in $y$ was computed in \eqref{eq:saddle_node}, which we restate here: there are a coordinate transformation $y = \Psi(z)$, with $\Psi'(0) = 1$, and functions $a(\omega_*), b(\omega_*)$,
 such that the $y$ equation transforms to 
\[
	z' = \omega_* + z^2(1 + a(\omega_*)) + z^3b(\omega_*).
\]
However, by reversibility, the $y$ equation is symmetric with respect to the transformation $y \to -y$, so it must be the case that $b(\omega_*)=0$. Therefore, we obtain 
 \begin{equation}
	\label{eq:normal_form}
	z' = \omega_* + z^2(1 + a(\omega_*)).
\end{equation}
We will now outline some estimates on the travel time of solutions of the equations above. Namely, we look for solutions, which satisfy $y(-L) = -\delta_0, y(0) = 0$. 
\begin{lemma}
	\label{lemma:subsec2}
	Let $\omega_* = \epsilon^2 > 0$. 
	The following estimates hold for the solution of our saddle-node bifurcation equation \eqref{eq:saddle} with $y(0) = 0, y(-L) = - \delta_1$: 
	\begin{enumerate}
		\item If $|x|< 1/3$, $y(x) = \epsilon^2 x + O(\epsilon^4 x^2)$. 
		\item If $|x| < 1,$ $y(-L + x) = -\delta_0 + \delta_0^2( 1 + o(\delta_0))x + O(\epsilon |x| + \delta_0^3 x^2)$.  
	\end{enumerate}
\end{lemma}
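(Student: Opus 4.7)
The plan is to prove both estimates by Taylor-expanding $y$ around the relevant boundary point — either $x = 0$ or $x = -L$ — and to evaluate the required derivatives using the ODE together with the vanishing conditions on $g$, in particular the evenness of $g$ in $y$ (which kills $g_y(0, \cdot)$ and all odd-power terms in the $y$-expansion of $g$).

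For part 1, I would first pass to the normal form~\eqref{eq:normal_form} via $y = \Psi(z, \omega_*)$ with $\Psi(0, \omega_*) = 0$ and $\Psi_z(0, \omega_*) = 1$, as in the proof of Lemma~\ref{lemma:hitting_time}; recall that reversibility forces $b(\omega_*) \equiv 0$, so no cubic term survives. In the $z$-equation $z' = \omega_* + z^2(1 + a(\omega_*))$ with $z(0) = 0$, one reads off $z'(0) = \epsilon^2$, $z''(0) = 2z(0)z'(0)(1+a) = 0$, and $z'''(0) = 2\epsilon^4(1 + a(\epsilon^2))$, so Taylor's theorem on $|x| < 1/3$ yields $z(x) = \epsilon^2 x + O(\epsilon^4 x^3)$. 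Writing $y(x) = \Psi(z(x), \epsilon^2) = z(x) + c_2(\epsilon^2) z(x)^2 + O(z(x)^3)$, the quadratic part of $\Psi$ produces the leading $O(\epsilon^4 x^2)$ correction, giving $y(x) = \epsilon^2 x + O(\epsilon^4 x^2)$. Passing to the normal form is essential here: in the original equation the correction $g(0, \epsilon^2) = O(\epsilon^4)$ to $y'(0)$ would only yield $y(x) = \epsilon^2 x + O(\epsilon^4|x|)$, a weaker bound.

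For part 2, I would Taylor-expand $y$ directly in the original variable around $x = -L$. From $y(-L) = -\delta_0$ the ODE gives $y'(-L) = \epsilon^2 + \delta_0^2 + g(-\delta_0, \epsilon^2) = \delta_0^2(1 + o(\delta_0)) + O(\epsilon^2)$, using $g(y, \omega_*) = O(y^4 + \omega_*^2)$. Since $g_y$ is odd in $y$ and $g_{yy}(0, 0) = 0$, one obtains $g_y(-\delta_0, \epsilon^2) = O(\delta_0^3 + \delta_0 \epsilon^2)$, so differentiating the ODE gives $y''(-L) = 2y(-L)y'(-L) + g_y(-\delta_0, \epsilon^2)y'(-L) = O(\delta_0^3 + \delta_0 \epsilon^2)$, and similarly $y'''(-L) = O(\delta_0^4)$. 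Taylor's theorem on $|x| < 1$ then produces
\[
y(-L + x) = -\delta_0 + \bigl[\delta_0^2(1 + o(\delta_0)) + O(\epsilon^2)\bigr] x + O(\delta_0^3 x^2 + \delta_0 \epsilon^2 x^2 + \delta_0^4 x^3),
\]
and the $O(\epsilon^2)\,x$ and $O(\delta_0 \epsilon^2 x^2)$ terms are absorbed into $O(\epsilon|x|)$ using $\epsilon \leq 1$ and $|x| < 1$, yielding the claimed estimate.

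No step is a real obstacle. The main subtlety is careful bookkeeping of the two independent small parameters $\epsilon$ and $\delta_0$, and recognizing that part 1 requires the normal-form transformation (without which one only gets $O(\epsilon^4|x|)$ rather than $O(\epsilon^4 x^2)$), while part 2 can be handled directly in $y$. Uniform validity of the Taylor remainder on the relevant intervals follows from the a priori bound $|y'| = O(\delta_0^2 + \epsilon^2)$, which keeps $y$ within a small neighborhood of the reference point where all higher derivatives of the right-hand side are uniformly controlled.
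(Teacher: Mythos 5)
Your proposal is correct and follows essentially the same strategy as the paper: pass to the normal form \eqref{eq:normal_form} and Taylor-expand around the relevant boundary point. The only real deviation is in part~2, where you expand directly in $y$ around $x=-L$, whereas the paper expands $z$ around $x=-L$ and then pushes the expansion through $\Psi$ afterwards; both versions produce the same bound, and yours avoids having to introduce the auxiliary quantity $\tilde\delta=-z(-L)$. Your remark that the normal form is what upgrades the remainder from $O(\epsilon^4|x|)$ to $O(\epsilon^4 x^2)$ in part~1 is a good observation and is implicit in the paper's choice to work in the variable $z$ (where $z'(0)=\omega_*$ exactly) rather than $y$. Two small points of bookkeeping, neither fatal: (i) you appeal to the a priori bound $|y'|=O(\delta_0^2+\epsilon^2)$ to keep $y$ near $-\delta_0$ on a unit interval — this is really a bootstrap, so it would be cleaner to state it that way; (ii) your third-order expansion $z(x)=\epsilon^2 x+O(\epsilon^4 x^3)$ is more precision than needed, since after composing with $\Psi(z)=z+c_2 z^2+O(z^3)$ the quadratic term already contributes $O(\epsilon^4 x^2)$, which is the paper's level of accuracy.
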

\begin{proof}
	We will use the normal form \eqref{eq:normal_form}, $y = \Psi(z)$. 
	
	For the first inequality, let $X = \argmin \{|z(x)| = \epsilon^2\}$. Then 
	\[
	z(X) = \epsilon^2 \leq \int_0^x \epsilon^2 + 2 \epsilon^4 dy \leq 3 \epsilon^2 x, 
	\]
	so $X \geq 1/3$. By Taylor's theorem with integral remainder, 
	\[
	z(x) = z(0) + x z'(0) + \frac{1}{2}x^2 \int_0^1 z''(sx)ds = \epsilon^2 x + O(\epsilon^4 x^2).
	\]
	Therefore, by $y(x) = \Psi(z(x))$ with $\Psi'(0) = 1$, $y(x) = \epsilon^2 x + O(\epsilon^4 x^2)$. 
	
	The second inequality can be proven in a similar fashion. Define $\tilde{\delta}$ by $z(-L) = -\tilde{\delta}$ and do a Taylor expansion  
	\begin{align*}
	z(-L + x) &= z(-L) + z'(-L)x + \frac{x^2}{2} \int_0^1 z''(sx)ds \\&= - \tilde{\delta} + (\epsilon^2 + \tilde{\delta}^2(1 + a(\epsilon^2)))x + \frac{x^2}{2} O(2zz') = -\tilde{\delta} + \tilde{\delta}^2x + O(\epsilon^2|x| + x^2\tilde{\delta}^3).
	\end{align*}
By definition of a normal form transformation, $\Psi(z) = y$. and in particular $\Psi (-\tilde{\delta}) = - \delta_0$, so 
\begin{align*}
	y(-L+x) &= \Psi(z(-L + x)) = \Psi(-\tilde{\delta} + \tilde{\delta}^2x + O(\epsilon^2|x| + x^2\tilde{\delta}^3)) \\
	&= \Psi(- \tilde{\delta} + \tilde{\delta}^2 x ) + O(\epsilon^2|x| + x^2\tilde{\delta}^3) \\
	&= -\delta_0 + \Psi'(-\tilde{\delta})\tilde{\delta}^2x + O(\tilde{\delta}^4x^2) + O(\epsilon^2|x| + x^2\tilde{\delta}^3) \\
	&= -\delta_0 + \delta_0^2(1 + o(\delta_0))x +  O(\epsilon^2|x| + x^2\tilde{\delta}^3),
\end{align*}
where in the last line we used $- \delta_0 =\Psi(- \tilde{\delta}) = -\tilde{\delta}(1 + o(\tilde{\delta}))=-\tilde{\delta}(1 + o(\delta_0))$. 
\end{proof}

\subsection{Geometry near \texorpdfstring{$\Fix \mathcal{R}$}{Fix R}}
\label{subsec:3}
Assume that $u^c(x; \epsilon) \in W^c(\epsilon^2)$ and $\partial_x u^c(0; \epsilon) \neq 0$. Then, $\mathbb{R}u^c_x(0; \epsilon) \oplus \Ran (P^u(\textbf{u}_{wt}))\oplus \Fix \mathcal{R} = Y$. Indeed, this holds for $\epsilon = 0$, and the mapping $\mathcal{L}_\epsilon : \mathbb{R}u^c_x(0; \epsilon) \oplus \Ran (P^u(\textbf{u}_{wt}))\oplus \Fix \mathcal{R} \to Y$ is bijective and bounded when $\epsilon = 0$. Therefore, it is bijective and bounded uniformly for $\epsilon$ near $0$, and, by the Open Mapping Theorem, its inverse is uniformly bounded in $\epsilon$. 

\subsection{Pushforward of \texorpdfstring{$\Fix \mathcal{R}$}{Fix R}}
\label{subsec:4}
\begin{figure}[t]
	\centering
	\begin{subfigure}[b]{0.48\textwidth}
		\centering
		\includegraphics[width=\textwidth]{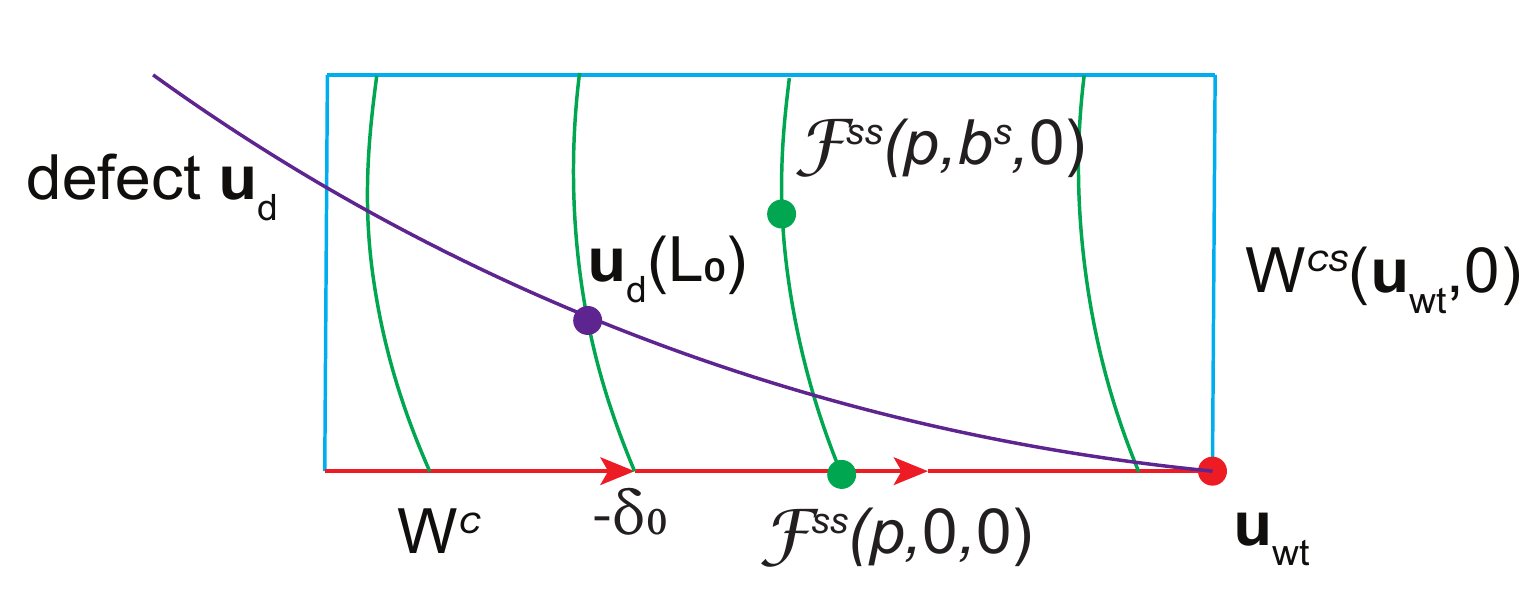}
		\caption{}\label{fig:center_manifold}
	\end{subfigure}
	\begin{subfigure}[b]{0.48\textwidth}
		\centering
		\includegraphics[width=\textwidth]{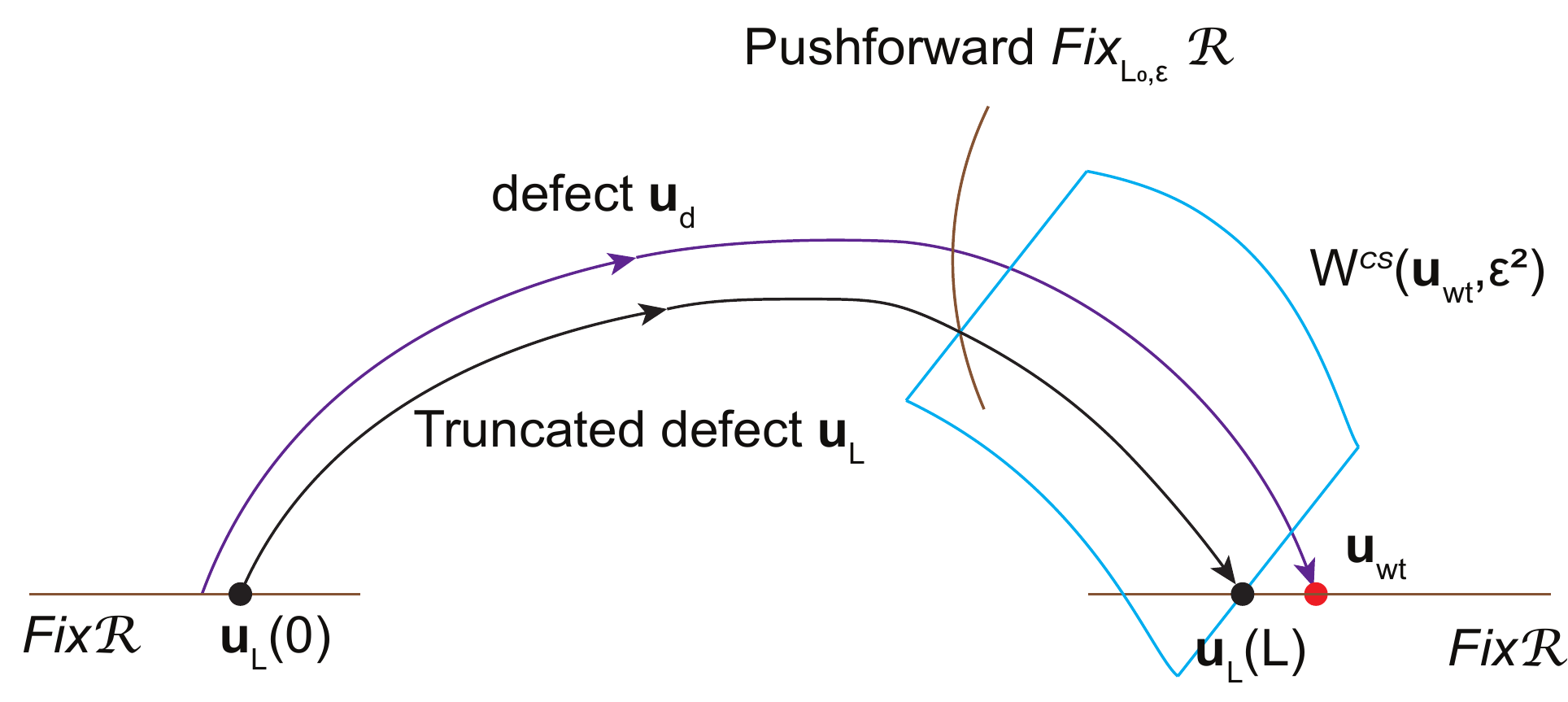}
		\caption{}\label{fig:dynamics_factored}
	\end{subfigure}
	\par
	\caption{Panel~(a) shows the fibration of the center-stable manifold and the defect when $\epsilon = 0$ (the time symmetry is factored out). Panel~(b) displays the intersection of the pushforward $\Fix_{L_0, \epsilon} \mathcal{R}$ and the center-stable manifold $W^{cs}(u_{wt}, \epsilon_0^2)$, and the corresponding solution $\textbf{u}_L$ (the time symmetry is factored out).  
	}
\end{figure}

The goal of this section is to compute the pushforward $\Fix_{L_0, \epsilon}\mathcal{R}$ of $\Fix \mathcal{R}$ along the defect $u_d$ from $x = 0$ to $x = L_0$ for each $L_0 \gg 1$ and each $\epsilon \ll 1$. Let $u(x) = u_d(x) + v(x)$, so $v_x = f_u(u_d(x)) v + O(|v|^2 + \epsilon^2)$. Let $\Phi_d^{c,s, u}(x,y)$ be an exponential trichotomy of the linearized about $u_d$ equation, and, additionally, let  $\mathcal{R}(\Ran \Phi^u_d(0,0)) = \Ran \Phi^s_d(0,0)$ (this is possible because the contact defect is reversible). 

\begin{lemma} \label{lemma:pushforward_Fix}
	For each $L_0 \gg 1$, $\epsilon \ll 1$, there exists a constant $C_1 > 0$, such that the pushforward $\Fix_{L_0, \epsilon} \mathcal{R}$ of $\Fix \mathcal{R}$ exists and  is parameterized by 
	\[
		\Fix_{L_0, \epsilon} \mathcal{R} = \{u_d(L_0) + a^u + O(e^{-\eta L_0}|a^u| + |a^u|^2 + \epsilon^2) ~:~ a^u \in \Ran \Phi^u_d(L_0, L_0) \mbox{ with } |a^u|, \epsilon \leq \frac{C_1}{L_0} \}.
	\]
\end{lemma}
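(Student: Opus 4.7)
The plan is to realize the pushforward $\Fix_{L_0,\epsilon}\mathcal{R}$ as a graph over a ball in $\Ran\Phi^u_d(L_0,L_0)$ via Lin's method on the finite interval $[0,L_0]$. Setting $u(x)=\textbf{u}_d(x)+v(x)$, the perturbation satisfies
\[
v_x = A(x) v + N(v,x,\epsilon^2), \qquad A(x):=G_p(\textbf{u}_d(x);\omega_d),
\]
where, since $G$ is linear in $\omega$, the remainder $N(v,x,\epsilon^2) := G(\textbf{u}_d+v;\omega_d+\epsilon^2) - G(\textbf{u}_d;\omega_d) - A(x)v$ is smooth and satisfies $N = O(|v|^2 + \epsilon^2)$. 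Theorem~\ref{theorem:exponential_trichotomies} supplies an exponential trichotomy $\Phi^{s,c,u}_d$ for $A(x)$, and the preamble to the lemma ensures that $\mathcal{R}$ interchanges $\Ran\Phi^s_d(0,0)$ and $\Ran\Phi^u_d(0,0)$ while preserving $\Ran\Phi^c_d(0,0)$.

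Duhamel's formula with the trichotomy writes mild solutions on $[0,L_0]$ as
\[
v(x) = \Phi^s_d(x,0)v^s_0 + \Phi^c_d(x,0)v^c_0 + \Phi^u_d(x,L_0)a^u + \int_0^x\Phi^s_d(x,s)P^s N\,ds + \int_0^x\Phi^c_d(x,s)P^c N\,ds + \int_{L_0}^x\Phi^u_d(x,s)P^u N\,ds,
\]
with free data $v^s_0\in\Ran\Phi^s_d(0,0)$, $v^c_0\in\Ran\Phi^c_d(0,0)$, and $a^u\in\Ran\Phi^u_d(L_0,L_0)$. The boundary condition $v(0)\in\Fix\mathcal{R}$ decouples into $v^s_0 = \mathcal{R}v^u(0)$, with $v^u(0)=\Phi^u_d(0,L_0)a^u+\int_{L_0}^0\Phi^u_d(0,s)P^u N\,ds$, together with $v^c_0\in\Fix\mathcal{R}|_{\Ran\Phi^c_d(0,0)}$. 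By Hypothesis~\ref{H2} and the action of $\mathcal{R}$ on the two-dimensional center, this last space is one-dimensional and spanned by the group tangent $\partial_\alpha\textbf{u}_d(0)$; by $\mathcal{T}_\alpha$-equivariance of both the evolution and $\Fix\mathcal{R}$, this scalar freedom merely translates the pushforward along the group orbit and is factored out of the parameterization, leaving $a^u$ as the sole parameter. Substituting the reversibility relation into the mild ansatz yields a fixed-point equation $v=\mathcal{F}(v;a^u,\epsilon)$; since $\|N\|_\infty\lesssim L_0^{-2}$ on balls of radius $L_0^{-1}$, and the stable/unstable Duhamel kernels decay exponentially while the center kernel grows at most like $C(\eta)e^{\eta L_0}$ for small $\eta$, one obtains a contraction on the ball of radius $C_2/L_0$ in $C([0,L_0];Y)$ and hence a unique $v(\cdot;a^u,\epsilon)$ by the Banach fixed-point theorem.

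The stated expansion then follows by evaluating $v(L_0) = \Phi^s_d(L_0,0)v^s_0 + a^u + \int_0^{L_0}\Phi^s_d(L_0,s)P^s N\,ds + \int_0^{L_0}\Phi^c_d(L_0,s)P^c N\,ds$: the $a^u$ summand is the leading contribution; since $\|v^s_0\|=\|\mathcal{R}v^u(0)\|\lesssim e^{-\kappa L_0}|a^u|$, the stable piece $\Phi^s_d(L_0,0)v^s_0$ is doubly exponentially small, namely $O(e^{-2\kappa L_0}|a^u|)=O(e^{-\eta L_0}|a^u|)$ for any $\eta<2\kappa$; and the Duhamel integrals are bounded by $L_0\|N\|_\infty=O(|a^u|^2+\epsilon^2)$. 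The main obstacle is the correct handling of the one-dimensional center freedom: one must identify $\Fix\mathcal{R}|_{\Ran\Phi^c_d(0,0)}$ with the tangent to the $\mathcal{T}_\alpha$-orbit of $\textbf{u}_d(0)$ and use equivariance to absorb it via a reparameterization, because otherwise the pushforward would carry an extra unfixed scalar direction not recorded in the stated formula.
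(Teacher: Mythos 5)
Your argument matches the paper's proof in all essentials: both set $v=u-\textbf{u}_d$, write a variation-of-constants ansatz using the exponential trichotomy with free data in the stable direction at $x=0$ and the unstable direction at $x=L_0$, run a contraction in a ball of radius $O(L_0^{-1})$, impose $v(0)\in\Fix\mathcal{R}$ to slave the stable/center components to $a^u$, and read off the expansion at $x=L_0$. The only cosmetic difference is that you make the treatment of the one-dimensional center freedom explicit (identifying it with the group tangent $\partial_\tau\textbf{u}_d(0)$ and factoring it out by $\mathcal{T}_\alpha$-equivariance), whereas the paper's ansatz implicitly sets the center component to zero and records this in the remark following the lemma.
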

\begin{proof}
	The idea of the proof is to use variation of parameters and the Banach Fixed Point theorem to construct the pushforward. We start with deriving the fixed-point equation. 
	Rewrite \eqref{eq:spatial_dynamics} with $v = u - u_d$: 
	\begin{align*}
		v_x &= G(u_d + v, \omega_*) - G(u_d, 0) = G_u(u_d, 0)v + (G(u_d + v, \omega_*) - G_u(u_d, 0)v - G(u_d, 0)) \\
		&=: G_u(u_d, 0)v + H(v,\omega_*), 
	\end{align*}
where $H(v;\omega_*) := G(u_d + v, \omega_*) - G_u(u_d, 0)v - G(u_d, 0)$. By Taylor's theorem\footnote{It will be convenient to write $\omega_* = \epsilon^2$},
\begin{align}
	\label{eq:cmt_est}
	H(v, \epsilon^2) &= O(|v|^2 +\epsilon^2) \\
	H_v(v, \epsilon^2) &= O(|v| + \epsilon^2). \nonumber
\end{align}
  Apply variation of parameters: 
	\begin{align}
		\label{eq:CMT_pushforward}
		v(x) &= \Phi^s_d (x,0)a^s + \Phi^u_d (x,L_0)a^u + \int_0^x \Phi^{cs}_d (x,y)H(v(y), \epsilon^2) dy \\
		&+ \int_{L_0}^x \Phi^{u}_d (x,y)H(v(y), \epsilon^2) dy, \quad 0 \leq x \leq L_0. \nonumber
	\end{align}
	By \eqref{eq:cmt_est} and the estimates for exponential trichotomies, the right-hand side of \eqref{eq:CMT_pushforward} is bounded by 
	\[
		C_0(|a^s| + |a^u| + L_0(\| v\|^2 + \epsilon^2)).
	\]
	With $C_1, C_2$ small, we will choose $|a^s|, |a^u|, \epsilon \leq C_1/L_0$ and $\| v \|_\infty \leq C_2/L_0$, so that
	\[
	\| RHS \| \leq C_0(2C_1 + C_2^2 + C_1^2)\frac{1}{L_0} \leq \frac{C_2}{L_0}
	\] 
	and, by \eqref{eq:cmt_est}: 
	\[
	\| D_v RHS \| \leq 2C_0 L_0 \|v\|_\infty \leq 2C_0 C_2 \leq \frac{1}{2}
	\]
	whenever $C_1, C_2$ are chosen small, depending on $C_0$, but not $L_0$. Therefore, by the Banach Fixed Point Theorem, there is an unique solution $v$ in the ball of radius $C_2/L_0$ for $|a^s|, |a^u|, \epsilon \leq C_1/L_0$, and 
	\[
	\| v\|_\infty \leq C_0 (|a^s| + |a^u| + L_0 \epsilon^2) \leq \frac{C_2}{L_0}. 
	\] 
	We impose the condition $v(0) \in \Fix \mathcal{R}$, so we can solve uniquely for $a^s = O(e^{-\kappa L}|a^u| + |a^u|^2 + \epsilon^2)$. Hence, there exists an unique $v(L_0)$, subject to $v(0) \in \Fix \mathcal{R}$, and it is given by 

	\begin{align*}
		v(L_0) &=  \Phi^s_d (L_0,0)a^s + a^u + \int_0^{L_0} \Phi^{cs}_d (x,y)H(v(y), \epsilon^2) dy \\
		&= a^u + O(e^{-\kappa L_0}|a^u| + |a^u|^2 + \epsilon^2)		
	\end{align*}
Therefore, the following holds:
\begin{equation*}
	\Fix_{L_0, \epsilon} \mathcal{R} = \{ u_d(L_0) + a^u + O(e^{-\kappa L_0}|a^u| + |a^u|^2 + \epsilon^2):~ a^u \in \Ran \Phi^u_d(L_0, L_0), \mbox{ with } |a^u|, \epsilon \leq \frac{C_1}{L_0} \}.
\end{equation*}
\end{proof}
The pushforward and the center-stable manifoldd are displayed on Figure \ref{fig:dynamics_factored}. The goal of the proof is to show that they intersect, and to adjust the parameter $\epsilon$ to ensure the resulting orbit travels from $\Fix \mathcal{R}$ to $\Fix \mathcal{R}$ in time $L$.  
\begin{remark}
In the above discussion we chose not to add a component in the $\partial_\tau$ direction, so it would not be incorrect to say the above result is on the pushforward of $\Fix \mathcal{R}/ \partial_\tau u_d(0)\mathbb{R}$. 	
\end{remark}

\subsection{Description of the center-stable manifold}
\label{subsec:5}
As noted in \S\ref{subsec:1}, the defect $\textbf{u}_d$ is in the center-stable manifold and $W^{cs}$ is fibered over $W^c$. In this section we will introduce notation for this fibration and we will express $\textbf{u}_d$ in said coordinates. 

We parameterize the strong-stable fibers in $W^{cs}$ with base points $p \in W^c$ as 
\begin{equation}
	\label{eq:fibration}
	\mathcal{F}^{ss}(p, b^s, \epsilon) = p + b^s + O(\delta_0|b^s|),
\end{equation}
where $b^s \in \Ran P^s(u_{wt})$ and $\mathcal{F}^{ss}(p, 0, \epsilon) = p \in W^c(\epsilon^2).$ In other words, $p$ is the base point of the fibration and $b^s$ parameterizes the fiber $\mathcal{F}^{ss}(p, b^s, \epsilon)$.

\begin{lemma}
	\label{lemma:subsec5}
	For all $\epsilon \geq 0, \epsilon \ll 1$, and all $L_0 \gg 1$, there is a base point $p_d(L_0, \epsilon) \in W^c$ and $b^s_d(L_0, \epsilon) \in \Ran P^u{u_{wt}},$ so that 
	\[
	\Fix_{L_0, \epsilon}\mathcal{R} \cap W^{cs}(\epsilon^2) = \mathcal{F}^{ss}(p_d(L_0, \epsilon), b^s_d(L_0, \epsilon), \epsilon). 
	\] 
\end{lemma}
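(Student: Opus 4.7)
The plan is to realize the intersection as the zero set of a single equation on $X$ and to invoke the implicit function theorem, uniformly in $L_0$ and $\epsilon$. Combining the parameterization of $\Fix_{L_0,\epsilon}\mathcal{R}$ from Lemma~\ref{lemma:pushforward_Fix} with the fibration \eqref{eq:fibration}, a point lies in $\Fix_{L_0,\epsilon}\mathcal{R}\cap W^{cs}(\epsilon^2)$ if and only if there exist $p=(\alpha,y)\in W^c(\epsilon^2)$, $b^s\in\Ran P^s(\textbf{u}_{wt})$ and $a^u\in\Ran\Phi^u_d(L_0,L_0)$ satisfying
\[
F(y,b^s,a^u;L_0,\epsilon):=\textbf{u}_d(L_0)+a^u+R(a^u,\epsilon) - \mathcal{F}^{ss}(p,b^s,\epsilon)=0,
\]
where $R(a^u,\epsilon)=O(e^{-\eta L_0}|a^u|+|a^u|^2+\epsilon^2)$ and $\mathcal{F}^{ss}(p,b^s,\epsilon)=p+b^s+O(\delta_0|b^s|)$. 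Since the $\partial_\tau\textbf{u}_d(0)$ direction was already quotiented out in \S\ref{subsec:4}, I fix $\alpha=0$ and treat $(y,b^s,a^u)$ as the unknowns.

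At $\epsilon=0$, Hypothesis~\ref{H1} together with the choice $L_0\gg1$ places $\textbf{u}_d(L_0)$ inside the local strong-stable fibration of $W^{cs}(0)$, producing unique $(y_d^{(0)}(L_0),b_d^{s,(0)}(L_0))$ with $\textbf{u}_d(L_0)=\mathcal{F}^{ss}(p^{(0)}_d(L_0),b^{s,(0)}_d(L_0),0)$ and $a^u=0$. This gives the base solution $F(y_d^{(0)},b_d^{s,(0)},0;L_0,0)=0$ around which to linearize. Differentiating $F$ in $(y,b^s,a^u)$ at this base point yields, to leading order, the bounded linear map
\[
(y,b^s,a^u)\mapsto a^u - b^s - y\,\partial_y p^{(0)}_d(L_0) + O(\delta_0)+O(e^{-\eta L_0}),
\]
where the errors come from the nonlinear fiber correction in \eqref{eq:fibration} and from $R$. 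Theorem~\ref{theorem:exponential_trichotomies} together with the uniform transverse decomposition isolated in \S\ref{subsec:3} gives $X=W^c(\textbf{u}_{wt})\oplus\Ran P^s(\textbf{u}_{wt})\oplus\Ran P^u(\textbf{u}_{wt})$ with projections whose norms are independent of $L_0$ and $\epsilon$, so this linearization is an invertible operator on $X$ whose inverse is uniformly bounded for $L_0\gg1$ and $\epsilon\ll1$.

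The implicit function theorem therefore produces unique $y_d(L_0,\epsilon)$, $b_d^s(L_0,\epsilon)$ and $a^u_d(L_0,\epsilon)$ depending smoothly on $\epsilon$, and setting $p_d(L_0,\epsilon)=(0,y_d(L_0,\epsilon))$ delivers the claimed equality $\Fix_{L_0,\epsilon}\mathcal{R}\cap W^{cs}(\epsilon^2)=\mathcal{F}^{ss}(p_d(L_0,\epsilon),b_d^s(L_0,\epsilon),\epsilon)$. The principal obstacle is keeping the implicit function theorem uniform in $L_0\to\infty$: one must check that neither the invertibility constant of the linearization nor the size of the nonlinear remainders degenerates. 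This is supplied by the exponential trichotomy on $\mathbb{R}$ from Theorem~\ref{theorem:exponential_trichotomies} (whose constants depend only on the spectral gap at $\textbf{u}_{wt}$), the $L_0$-uniform control on $R$ furnished by Lemma~\ref{lemma:pushforward_Fix}, and the fact that the fiber nonlinearity in \eqref{eq:fibration} is controlled by $\delta_0$ alone.
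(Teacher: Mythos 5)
Your proof is correct and takes essentially the same approach as the paper's: both identify the intersection at $\epsilon=0$ as the single point $\textbf{u}_d(L_0)$ (modulo the $\partial_\tau$ direction), argue transversality there by combining the $a^u$-parameterization of $\Fix_{L_0,0}\mathcal{R}$ from Lemma~\ref{lemma:pushforward_Fix} with the strong-stable fibration of $W^{cs}$, and persist the transverse intersection to $\epsilon>0$. You spell out the persistence step as an explicit implicit function theorem argument in the unknowns $(y,b^s,a^u)$, whereas the paper invokes the stability theorem for transversal intersections; these are the same underlying tool, so the two proofs coincide in substance.
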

\begin{proof}
	The intersection $\Fix_{L_0, \epsilon}\mathcal{R} \cap W^{cs}(\epsilon^2)$ is 
	the point $u_d(L_0)$\footnote{Note that, had we added the $\partial_\tau$ direction in \S\ref{subsec:4}, the intersection would have been a curve, instead of a point (but transversality would still hold).}. The intersection is transverse when $\epsilon = 0$: in this case, Lemma~\ref{lemma:pushforward_Fix} yields 
	\[\Fix_{L_0, 0} \mathcal{R} = \{u_d(L_0) + a^u + O(e^{-\eta L_0}|a^u| + |a^u|^2 ) ~:~ a^u \in \Ran \Phi^u_d(L_0, L_0) \mbox{ with } |a^u| \leq \frac{C_1}{L_0} \},
	\]
so the tangent space $T_{u_d(L_0)} \Fix_{L_0, 0} \mathcal{R}$ is $\Ran \Phi^u_d(L_0, L_0) + O(e^{-\eta L_0})$. The tangent space $T_{u_d(L_0)} W^{cs}$ is $\Ran \Phi^{cs}_d(L_0, L_0)$ by \citep[Theorem 5.1]{sandstede2004defects}, so indeed we have transversality when $\epsilon = 0$ and $L_0 \gg 1$.  Both $\Fix_{L_0, \epsilon}\mathcal{R} $ and $W^{cs}(\epsilon^2)$ are $C^1$ in $\epsilon$, so transversality persists when we perturb $\epsilon > 0$ by the stability theorem for transversality \citep[\S6]{guillemin2010differential}. 
\end{proof}
\begin{corollary}
	\label{cor:existence_u_eps}
	Assume $\epsilon > 0$, $\epsilon \ll 1$.
	There is an unique number $L(\epsilon)$ and unique $u^c(x; \epsilon) \in W^c$, such that $u^c(0; \epsilon) = 0$ and $u^c(-L; \epsilon) = p_d(L_0, \epsilon)$. Furthermore, $\epsilon L(\epsilon) \in C^1$ with $\epsilon L(\epsilon) = \pi/2 + O(\epsilon)$. In the notation we omit the dependence of $u^c$ and $L$ on $L_0$. 
\end{corollary}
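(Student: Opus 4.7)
The plan is to reduce the corollary to the saddle-node travel-time machinery of Lemma~\ref{lemma:hitting_time} and its corollary, and then use the smooth $\epsilon$-dependence of $p_d(L_0,\epsilon)$ established in Lemma~\ref{lemma:subsec5}.

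First, I would recall from \eqref{eq:normal_form} that, in equivariant coordinates on $W^c(\epsilon^2)$, the $y$-equation is $z'=\epsilon^2+z^2(1+a(\epsilon^2))$, i.e.\ a saddle-node equation of the form \eqref{eq:saddle} with $g(y,\epsilon^2)=y^2 a(\epsilon^2)$, which is even in $y$ thanks to the reversibility that killed the cubic term in the normal-form derivation. For $\epsilon>0$ the right-hand side is strictly positive on the relevant range of $y$, so solutions are strictly monotone in $x$; in particular, for any pair $y_-<y_+$ there is a unique travel time from $y_-$ to $y_+$. This observation will deliver the uniqueness assertion for $L$.

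Next I would denote by $y_d(\epsilon)$ the $y$-coordinate of $p_d(L_0,\epsilon)$. By Lemma~\ref{lemma:subsec5}, $p_d$ depends $C^1$ on $\epsilon$ (the intersection $\Fix_{L_0,\epsilon}\mathcal{R}\cap W^{cs}(\epsilon^2)$ is transverse and both objects are $C^1$ in $\epsilon$), and, since $p_d(L_0,0)$ lies on the fiber through $(0,-\delta_0)$ up to exponentially small corrections in $L_0$, we have $\delta(\epsilon):=-y_d(\epsilon)\in[\delta_0/2,2\delta_0]$ for $\epsilon$ small. I then set
\[
L(\epsilon):=T_-\bigl(\epsilon,\delta(\epsilon)\bigr),
\]
the backward travel time from $y=0$ to $y=-\delta(\epsilon)$ furnished by the corollary to Lemma~\ref{lemma:hitting_time}. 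By monotonicity this $L$ is unique, and the orbit $u^c(x;\epsilon)\in W^c$ is reconstructed by integrating $\alpha'=y$ with $\alpha(0)=0$; the residual $\mathcal{T}_\alpha$-symmetry lets us shift $p_d$ within its group orbit so that the endpoint condition $u^c(-L;\epsilon)=p_d(L_0,\epsilon)$ is met, which is how the bookkeeping is implicitly set up in the surrounding text.

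The estimate and regularity then come directly from Lemma~\ref{lemma:hitting_time}(3): because the reduced vector field is even in $y$, the obstruction $\eta(\epsilon)$ vanishes identically, so the corollary gives
\[
\epsilon T_-(\epsilon,\delta)=\tfrac{\pi}{2}+\zeta_-(\epsilon,\delta),\qquad \zeta_-\in C^r,\quad \zeta_-=O(\epsilon).
\]
Composing with the $C^1$ function $\delta(\epsilon)$ yields $\epsilon L(\epsilon)=\tfrac{\pi}{2}+\zeta_-(\epsilon,\delta(\epsilon))\in C^1$ with the claimed $\epsilon L(\epsilon)=\pi/2+O(\epsilon)$. The smoothness ceiling of $C^1$ is dictated not by the travel-time formula (which is $C^r$) but by the regularity of $\delta(\epsilon)$ coming from Lemma~\ref{lemma:subsec5}.

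I expect the only real subtlety to be organizational rather than analytic: keeping the $\alpha$-bookkeeping consistent with the $\mathcal{T}_\alpha$-normalization used to pin down $p_d$, and confirming that the logarithmic obstruction $\eta(\epsilon)\log\epsilon$ is indeed absent here---which is the case precisely because the $\mathcal{R}$-reversibility in Hypothesis~\ref{H3} forces evenness in $y$ of the reduced field \eqref{eq:normal_form}. Once these two points are in place, the rest of the argument is a direct substitution into the travel-time formula.
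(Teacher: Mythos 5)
Your proposal is correct and follows essentially the same route as the paper: invoke Lemma~\ref{lemma:subsec5} to obtain the base point $p_d(L_0,\epsilon)$, reduce to the $y$-equation on the center manifold, and apply Lemma~\ref{lemma:hitting_time} (via its corollary) to get the travel time and the estimate $\epsilon L(\epsilon)=\pi/2+O(\epsilon)$. You helpfully make explicit two points the paper leaves implicit---that reversibility kills the $\eta(\epsilon)\log\epsilon$ obstruction so part~(3) of the lemma applies, and that the $C^1$ regularity ceiling is imposed by the $\epsilon$-dependence of $\delta(\epsilon)=-y_d(\epsilon)$ rather than by the travel-time formula itself.
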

\begin{proof}
	By Lemma~\ref{lemma:subsec5}, there is always a base point $p_d(L_0, \epsilon)\in W^c$ of $u_d(L_0, \epsilon)$. By the results in \S\ref{subsec:2}, the dynamics on the center manifold is determined by the $y$ equation; when $\epsilon > 0$, there is a finite travel time of $p_d(L_0, \epsilon)$ to $\Fix \mathcal{R}$ (i.e. to $\{y = 0\}$).   Lemma~\ref{lemma:hitting_time} shows that, as long as $L_0 \gg 1$ is fixed,  the travel time of $p_d(L_0, \epsilon)$ to $\Fix \mathcal{R}$ is  $L(\epsilon)$, such that $\epsilon L(\epsilon ) = \pi/2 + O(\epsilon)$ is $C^1$. 
\end{proof}
\subsection{Transversality of the pushforward }
\label{subsec:6}
We observe the following lemma holds:
\begin{lemma}
	$\Fix_{L_0, \epsilon} \mathcal{R}$ is transverse to $\mathbb{R}u^c_x(-L; \epsilon) \bigoplus \Ran P^s(u_{wt})$ near $u_d(0)$. 
\end{lemma}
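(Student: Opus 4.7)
The plan is to compute the tangent space of $\Fix_{L_0,\epsilon}\mathcal{R}$ at $\textbf{u}_d(L_0)$ from Lemma~\ref{lemma:pushforward_Fix}, to compute $u^c_x(-L;\epsilon)$ in the equivariant $(\alpha,y)$ coordinates on $W^c$ using the results of \S\ref{subsec:2}, and then to verify the transversality by a dimension count against the trichotomy decomposition of $X$. Recall from the remark at the end of \S\ref{subsec:4} that the pushforward was constructed transverse to $\mathbb{R}\partial_\tau\textbf{u}_d(0)$, so the transversality claim should be read in the quotient $X/\mathbb{R}\partial_\tau\textbf{u}_d(0)$---equivalently, the three subspaces together with $\mathbb{R}\partial_\tau\textbf{u}_d(0)$ must span $X$ (and the statement is naturally read near $\textbf{u}_d(L_0)$, where the pushforward sits).

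First, Lemma~\ref{lemma:pushforward_Fix} writes $\Fix_{L_0,\epsilon}\mathcal{R}$ as a graph over $\Ran\Phi^u_d(L_0,L_0)$ with correction $O(e^{-\kappa L_0}+\epsilon^2)$, so its tangent space at $\textbf{u}_d(L_0)$ is $\Ran\Phi^u_d(L_0,L_0)$ up to the same perturbation. Because $\textbf{u}_d(L_0)$ is exponentially close to $\Gamma(\textbf{u}_{wt})$, roughness of exponential trichotomies (Theorem~\ref{theorem:exponential_trichotomies}) identifies $\Ran\Phi^u_d(L_0,L_0)$ with $\Ran P^u(\textbf{u}_{wt})$ up to an error of size $O(e^{-\kappa L_0})$, after applying the appropriate $\mathcal{T}_\alpha$-translation to the nearest wave-train point.

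Second, writing $u^c_x = \alpha_x\,\partial_\alpha + y_x\,\partial_y$ in the local $(\alpha,y)$ coordinates on $W^c$ and using Lemma~\ref{lemma:subsec2} at $x=-L$, we obtain $\alpha_x(-L) = y(-L) = -\delta_0$ and $y_x(-L) = \epsilon^2 + \delta_0^2(1+a(\epsilon^2)) = \delta_0^2(1+o(1))$. Under the embedding $T_pW^c \subset X$, the coordinate vector $\partial_\alpha$ is parallel to $\partial_\tau\textbf{u}_{wt}$ up to $O(\delta_0)$, while $\partial_y$ is a fixed transverse vector $e_y$; together they span $\Ran P^c(\textbf{u}_{wt})$ by Hypothesis~\ref{H2}. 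Hence
\begin{equation*}
u^c_x(-L;\epsilon) = -\delta_0\,\partial_\tau\textbf{u}_{wt} + \delta_0^2(1+o(1))\,e_y + \text{h.o.t.},
\end{equation*}
with a nonzero $e_y$-component of order $\delta_0^2$.

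Combining these computations, the trichotomy decomposition $X = \Ran P^u(\textbf{u}_{wt}) \oplus \Span(\partial_\tau\textbf{u}_{wt},e_y) \oplus \Ran P^s(\textbf{u}_{wt})$ together with the nonvanishing $e_y$-component of $u^c_x(-L;\epsilon)$ gives
\begin{equation*}
\Ran P^u(\textbf{u}_{wt}) + \mathbb{R}u^c_x(-L;\epsilon) + \Ran P^s(\textbf{u}_{wt}) + \mathbb{R}\partial_\tau\textbf{u}_d(0) = X
\end{equation*}
at leading order. Since transversality is an open condition, this persists under the $O(e^{-\kappa L_0}+\epsilon^2)$ perturbation of the pushforward's tangent space and the $O(\delta_0)$ error in identifying $T_pW^c$ with $\Ran P^c(\textbf{u}_{wt})$. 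The main obstacle I anticipate is not conceptual but the careful book-keeping of the three small parameters $\delta_0,\epsilon,e^{-\kappa L_0}$ and of the quotient by $\mathbb{R}\partial_\tau\textbf{u}_d(0)$; the geometric content reduces to the single fact that the center-manifold trajectory $u^c(x;\epsilon)$ crosses the group orbit of the wave train strictly transversely at $x=-L$.
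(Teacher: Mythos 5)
Your proof is correct and takes essentially the same route as the paper, which disposes of this lemma in one line by pointing back to the transversality established in Lemma~\ref{lemma:subsec5}. The one thing to note is that your version is actually more complete than the paper's: the paper's reference to Lemma~\ref{lemma:subsec5} gives transversality of $\Fix_{L_0,\epsilon}\mathcal{R}$ with the full center-stable manifold $W^{cs}$, and to descend to the codimension-one submanifold $\mathbb{R}u^c_x(-L;\epsilon)\oplus\Ran P^s(\textbf{u}_{wt})$ (modulo the $\partial_\tau$-direction, as you correctly read off from the remark at the end of \S\ref{subsec:4}) one still needs to know that $u^c_x(-L;\epsilon)$ has a nonzero component in the $e_y$-direction of the center space, i.e.\ is not parallel to $\partial_\tau\textbf{u}_{wt}$. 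You supply this from Lemma~\ref{lemma:subsec2} via $y_x(-L)=\delta_0^2(1+o(1))\neq 0$, which is exactly the ingredient the paper's one-liner tacitly uses; your book-keeping of the $O(e^{-\kappa L_0}+\epsilon^2)$ perturbation from Lemma~\ref{lemma:pushforward_Fix} and the roughness identification of $\Ran\Phi^u_d(L_0,L_0)$ with $\Ran P^u(\textbf{u}_{wt})$ is also the right way to make the openness of transversality rigorous.
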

\begin{proof}
	The proof follows from the transversality outlined in Lemma~\ref{lemma:subsec5}. 
\end{proof}
\subsection{Solving near the center-stable manifold}
\label{subsec:7}
The goal of this section is to apply variation of parameters to solve for orbits $u$ near the center-stable manifold. 

We start with some notation on fibrations. We will use the coordinates $p_d(L_0, \epsilon), b^s_d(L_0, \epsilon)$ from Lemma~\ref{lemma:subsec5} to define  
\[
\mathcal{F}^{ss}_R(b^s, \epsilon) := \mathcal{F}^{ss}(p_d(L_0, \epsilon), b^s + b^s_d(L_0, \epsilon), \epsilon).
\]
We will define $u_R(x; \epsilon, b^s)$ to be the solution such that $u_R (-L(\epsilon); \epsilon, b^s) = \mathcal{F}^{ss}_R(b^s, \epsilon)$, where $L(\epsilon)$ is given from  Corollary~\ref{cor:existence_u_eps}. In particular, for $\epsilon > 0$, $u_R (-L(\epsilon); \epsilon, 0)$ satisfies $u_R(0; \epsilon, 0) \in \Fix \mathcal{R}$. 
%
%
%
%
The variable $b^s$ will account for changes within the stable fiber (to be used later).

We will look for solutions of \eqref{eq:spatial_dynamics} of the type 
	\[
	u(x) = u_R(x; \epsilon, b^s) + v(x),
	\]
	so that $v_x = u(x)_x -u_R(x; \epsilon, b^s)  = G(u_R + v, \epsilon^2) - G(u_R, \epsilon^2)$, i.e 
	\begin{align} \label{eq:near_cman}
	v_x &= G_u(u_R(x, \epsilon, b^s), \epsilon^2)v + G(u_R + v, \epsilon^2) - G(u_R, \epsilon^2) - G_u(u_R(x; \epsilon, b^s), \epsilon^2)v \\ &=: G_u(u_R(x; \epsilon, b^s), \epsilon^2)v + H_R(v, \epsilon^2) \nonumber. 
	\end{align}
	By Taylor's theorem, 
	\begin{align} \label{eq:cmt_est_R}
		H_R(v, \epsilon^2) &= O(|v|^2) \\
		\partial_v H_R(v, \epsilon^2) &= O(|v|). \nonumber
	\end{align} 
Therefore, we will use the fixed-point equation
\begin{equation}
	\label{eq:fixed_point_center-stable}
	v(x) = \Phi^u_{\epsilon^2, b^s}(x,0)a^u_0 + \int_{-L + l_0}^x \Phi^{cs}_{\epsilon^2, b^s}(x,y)H_R(v(y), \epsilon^2) dy + \int_{l_1}^x \Phi^u_{\epsilon^2, b^s} (x,y)H_R(v(y), \epsilon^2)dy
\end{equation}
for $x \in [-L + l_0, l_1]$. 	A couple of remarks are in order. First, $\Phi^u_{\epsilon^2, b^s}$, $\Phi^{cs}_{\epsilon^2, b^s}$ come from the exponential trichotomies when linearizing about $u_R(x;\epsilon, b^s)$, hence they depend  on $\epsilon, b^s$, but by roughness of exponential dichotomies and trichotomies, the dependence is smooth and the bounds on exponential trichotomies can be chosen independently of $\epsilon, b^s$. Second, the reason why the equation for $v_x$ has no component like $\Phi^{cs}(x,0)a^{cs}_0$ is that here we aim to account for the unstable direction only, and we will use $b^s$ to account for the stable direction. The parameters $l_0, l_1$ are considered to be small; they need not be zero, because we will need them to match in the $\partial_x u_R$ direction near $\Fix_{L_0, \epsilon} \mathcal{R}$ and near $u_{wt}$ respectively. 

To apply the Contraction Mapping Theorem to \eqref{eq:fixed_point_center-stable} we will introduce the exponentially weighed norm $\|v\|_\eta := \sup_{x \in [-L + l_0, l_1] }e^{\eta |x|}|v(x)|$, where $\eta > 0$. As long as $\eta < \kappa$ ($\kappa$ is the exponent in the definition of exponential trichotomies), the following inequalities hold for the right-hand side of \eqref{eq:fixed_point_center-stable}: 

\begin{align*}
	\|RHS \|_\eta 
	&\leq C|a^u_0| + e^{\eta |x|} \left[   
	\int_{-L + l_0}^x e^{\epsilon(x-y)}e^{-2 \eta |y|} dy + \int_{l_0}^x e^{-\kappa(y - x)}e^{-2 \eta |y|} dy
	\right] C \|v\|^2_\eta \\
	&\leq C|a^u_0| + e^{\eta |x|} \left[  
	e^{-2\eta |x|} + e^{-2\eta (L -l_0 )}e^{\epsilon (L - l_0)} + e^{- \kappa |x|} + e^{-2\eta|x|}
	 	\right] C \|v\|^2_\eta \\
	 	&\leq C(|a^u_0| + \|v\|^2_\eta). 
\end{align*}
Similarly, we can show that$
\|\partial_v RHS \|_\eta \leq 1/2 $, so Banach's Fixed Point Theorem shows there is a unique solution $v(x) = v_*(x; \epsilon, \eta, l_0, l_1, L, b^s)$ and 
\[
|v(x)| \leq C e^{-\eta |x|}|a^u_0| 
\]
for all small $a^u_0 \in \Ran \Phi^u_R(-l_1, -l_1)$.  
In particular, we can estimate 
\[
v(-L + l_0) \leq Ce^{-\eta L}|a^u_0|,
\]
where $C$ is independent of $l_0, l_1$, so 
 \begin{equation}
 	\label{eq:sol_l0}
 	u_R(-L + l_0; \epsilon, b^s) + v(-L + l_0) = u_R(-L + l_0; b^s; \epsilon) + O(e^{-\eta L}|a^u_0|).
 \end{equation}

 Furthermore, our solution at $l_1$ is
\begin{equation}
	\label{eq:sol_l1}
	u_R(l_1; \epsilon, b^s) + v(l_1) = u_R(l_1;\epsilon, b^s) + a^u_0 + O(|a^u_0|^2).
\end{equation}
In the next section, we will need \eqref{eq:sol_l0}, \eqref{eq:sol_l1} to do the matching. 
\subsection{Matching}
\label{subsec:8}
Matching at $x = l_1$: by \eqref{eq:sol_l1} we have
\[
	u_R(l_1; \epsilon, b^s) + a^u_0 + O(e^{-\eta L} + |a^u_0|^2) \in \Fix \mathcal{R}. 
\]

Our matching at $x = -L + l_0$ looks like this: 

\[
	u_R(-L + l_0; \epsilon, b^s) + v(-L + l_0) \in \Fix_{L_0, \epsilon}\mathcal{R}, 
\]
and by Lemma~\ref{lemma:pushforward_Fix} and \eqref{eq:sol_l0}, 
 the condition at $x = -L + l_0$ is
\[
\phi_{l_0} (\mathcal{F}^{ss}_R(b^s,\epsilon)) + O(e^{-\eta x}|a^u_0|) = u_d(L_0) + a^u + O(e^{-\eta L_0}|a^u| + |a^u|^2 + \epsilon^2) 
\], 
where $\phi_{l_0}$ denotes the local flow on the center-stable manifold. By Taylor's theorem, this yields 
\[
\phi_{l_0}(\mathcal{F}^{ss}_R(0; \epsilon)) + b^s + O(\delta_0|b^s|) + O(e^{- \eta L}|a^u_0|) = \mathcal{F}^{ss}_R(0; \epsilon) + a^u + O(e^{-\eta L_0}|a^u| + |a^u|^2 + \epsilon^2)
\]
We will write the two matching conditions together an explain why they can be solved: 
\begin{align}\label{eq:matching_all}
	u_R(l_1; \epsilon, b^s) &+ a^u_0 + O(e^{-\eta L} + |a^u_0|^2) \in \Fix \mathcal{R}  \\
	\phi_{l_0}(\mathcal{F}^{ss}_R(0; \epsilon)) &- \mathcal{F}^{ss}_R(0; \epsilon) + b^s - a^u = O(\delta_0|b^s|) + O(e^{- \eta L}|a^u_0|) + O(e^{-\eta L_0}|a^u| + |a^u|^2 + \epsilon^2). \nonumber
\end{align}
The left-hand side of the first equation is a perturbation of a linear isomorphism $(l_1, a^u_0) \to \Fix \mathcal{R}$ by \S\ref{subsec:3}: varying $l_1$ corresponds to motion in the $\partial_x u_R(0;0,0)$ direction and varying $a^u_0$ allows one to traverse the remainder of $\Fix \mathcal{R}$, namely $\Fix \mathcal{R}/(\partial_x u_R(0;0,0) \mathbb{R})$. 
The left-hand side of the second equation is a perturbation of a boundedly invertible linear isomorphism as well (see \S\ref{subsec:6}: varying $l_0$ takes care of the motion in $\partial_x$ direction, and $b^s, a^u$ span the stable and unstable direction). Therefore, \eqref{eq:matching_all} is of the type 
$\mathcal{A} z = \mathcal{G}(z, \epsilon), G(z) = O(|z|^2 + |\epsilon|)$, where $z = (l_0, l_1, a^u, a^u_0, b^s)$ (here we are using Lemma~\ref{lemma:subsec2} to solve for $l_0, l_1$). The linear operator $\mathcal{A}$ is boundedly invertible by the arguments above, so such equations can be solved by the Banach Fixed Point Theorem. Therefore, $l_0, l_1, a^u, a^u_0, b^s$ are all parameterized by $\epsilon$. Finally, the solution, which we constructed, travels from $\Fix \mathcal{R}$ to $\Fix \mathcal{R}$ in time $L_0 + L(\epsilon) + l_0(\epsilon) + l_1(\epsilon))$, hence, if we want that travel time to be a fixed constant $L$, we can use the Implicit Function Theorem to solve for $\epsilon (L)$. To obtain the nonzero $\partial_\epsilon$ derivative, one can check that $\partial_\epsilon l_j(\epsilon) = O(\epsilon)$, $j = 0, 1$, and then use Corollary~\ref{cor:existence_u_eps}. This finishes the existence part of the proof. 

Finally, the uniqueness follows from the uniqueness in Banach's Fixed Point Theorem and by $\mathcal{T}_\alpha$ symmetry. 

\section{Estimates on Truncated Contact Defects}
\label{sectiom:estimates}

In this section, we estimate the distance between the truncated defect $\textbf{u}_L(x)$ to the original defect
$\textbf{u}_d(x)$ on $(-L,L)$. To do so, we can use the proof of Theorem~\ref{theorem:existence}. We remark that $\Gamma(\textbf{u}_d) \cup \Gamma(\textbf{u}_{wt})$, $\Gamma(\textbf{u}_L)$ are two invariant tori, which we proved are $O(\epsilon^2)$ away from each other. These tori inherit the local coordinates $(\alpha, y)$ and $(\alpha_L, y_L)$ from the center manifold, and we can extend these to global coordinates on the tori.  

\begin{corollary}
	Assume Hypotheses~\ref{H1}-\ref{H5}. 
	Assume $\textbf{u}_d(x), \textbf{u}_L(x)$ have local coordinates as described above. The 
	following estimates hold:  
	\begin{align}
		\label{eq:estimates_local_coord}
		|\omega_*(L)|  =|\epsilon_*^2(L)| &= O(L^{-2}), \nonumber \\
		\max_{|x|\leq L} |y(x) - y_L(x)| &=  O(L^{-1}), \nonumber \\
		\max_{|x|\leq L} |\alpha(x) - \alpha_L(x)| &=  O(\log L), \\
		\max_{|x|\leq L} |\alpha'(x) - \alpha'_L(x)| &=  O(L^{-1}) \nonumber.
	\end{align}
\end{corollary}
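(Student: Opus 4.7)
The first estimate $|\omega_*(L)| = \epsilon_*^2(L) = O(L^{-2})$ is immediate from Theorem~\ref{theorem:existence}. For the other three bounds, my plan is to compare the two trajectories by splitting $[-L,L]$ into an inner region $\{|x|\leq L_0\}$ (with $L_0$ a fixed large constant) and an outer region $\{L_0\leq|x|\leq L\}$. In the inner region, the variation-of-parameters fixed point in \S\ref{subsec:7} together with the matching argument in \S\ref{subsec:8} produces $\textbf{u}_L(x)$ as an $O(\epsilon_*^2)=O(L^{-2})$ perturbation of $\textbf{u}_d(x)$ in $X$; since both tori inherit their $(\alpha,y)$ chart from the same center manifold, this transfers to $|y-y_L|,|\alpha-\alpha_L|=O(L^{-2})$ on $\{|x|\leq L_0\}$. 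In the outer region, both projections are slaved to the 2D local center manifold and satisfy the saddle-node normal form \eqref{eq:normal_form}, with $\omega_*=0$ for $y$ and $\omega_*=\epsilon_*^2$ for $y_L$, with boundary data agreeing at $|x|=L_0$ to order $O(L^{-2})$.

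Setting $\Delta := y - y_L$, the two ODEs combine to give the linear inhomogeneous equation
\[
\Delta' = -\epsilon_*^2 + (y+y_L)\Delta + O\bigl((y^2+y_L^2)|\Delta|\bigr).
\]
By Lemma~\ref{lemma:saddle_node_estimates} for $y$ and the explicit trigonometric solution $y_L(x)=\epsilon_*\tan(\epsilon_*(x-x_0))$ of \eqref{eq:normal_form} for $y_L$, one has $|y|,|y_L|=O(1/|x|)$ uniformly on $[L_0,L]$. Consequently, the integrating factor $\mu(x) = \exp\bigl(-\int_{L_0}^x (y+y_L)\,ds\bigr)$ grows only polynomially, and variation of parameters gives $|\Delta(x)| = O(|\Delta(L_0)|) + O(\epsilon_*^2 L) = O(L^{-1})$ uniformly. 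Combined with the inner region, this establishes the $y$-estimate.

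The derivative bound $|\alpha'(x)-\alpha_L'(x)| = |\Delta(x)| = O(L^{-1})$ is then immediate from $\alpha'=y$ on the reduced flow. For $\alpha$ itself, integrate $\alpha' - \alpha_L' = \Delta$ from $x=0$: the common reversibility condition $\textbf{u}_d(0),\textbf{u}_L(0)\in\Fix\mathcal{R}$ together with the $\mathcal{T}_\alpha$-symmetry allows a compatible phase choice with $\alpha(0)=\alpha_L(0)$. The inner-region integral of $\Delta$ contributes $O(L_0 L^{-2})=O(L^{-2})$, and the outer-region integral is bounded at worst by $O(\log L)$, which reflects the logarithmic divergence of the phase-correcting functions $\theta_\pm(x)$ noted after Definition~1 and already forces each of $\alpha(x),\alpha_L(x)$ individually to grow like $\log L$.

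The main obstacle is the outer-region linear comparison: the integrating factor $\mu$ grows polynomially as the two trajectories approach the wave train, and one must track the cancellations between that growth and the $O(L^{-2})$ forcing carefully to extract the sharp $O(L^{-1})$ rate rather than something worse. The essential inputs that make this tractable are the explicit solution of \eqref{eq:normal_form} and the smooth conjugacy $\Psi$ constructed in \S\ref{subsec:2}, which together render the a-priori bound $|y|,|y_L|=O(1/|x|)$ transparent.
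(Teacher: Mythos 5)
Your proof is correct and shares the paper's high-level structure: split into the inner region $\{|x|\le L_0\}$ (controlled by the Lin's-method construction) and the outer region $\{L_0\le|x|\le L\}$ (controlled by the center-manifold dynamics and Lemma~\ref{lemma:saddle_node_estimates}). What you do differently, and more carefully, is the outer-region comparison behind the second estimate: you derive the linear equation for $\Delta=y-y_L$ and control it by variation of parameters with the integrating factor $\mu(x)=\exp\bigl(-\int_{L_0}^x(y+y_L)\,ds\bigr)\approx(x/L_0)^2$, tracking the cancellation between the polynomial growth of $\mu$ and the $O(\epsilon_*^2)$ forcing to obtain a uniform $O(L^{-1})$ bound on $\Delta$. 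The paper instead reads the bound off from $y(x)=O(1/x)\Rightarrow y(x)=O(1/L)$ on $[L_0,L]$, which taken literally only gives $O(1/L_0)=O(1)$ for $\sup_{[L_0,L]}|y|$; a comparison argument of the kind you supply is what is actually needed, since neither $y$ nor $y_L$ is uniformly $O(1/L)$ on the outer region while their difference is. Your pointwise bound $|\Delta(x)|=O(\epsilon_*^2 x)$ would even give $O(1)$ for the $\alpha$-estimate after integration, versus the $O(\log L)$ the paper obtains by integrating the crude $|y|+|y_L|=O(1/x)$; either upper bound verifies the corollary as stated. Two things worth a sentence each to make the write-up self-contained: trace the inner-region input $|\Delta(L_0)|=O(L^{-2})$ back to the $O(\epsilon^2)$ sizes of $a^u_0,b^s,l_0,l_1$ produced by the matching in \S\ref{subsec:8}, and justify the phase normalization $\alpha(0)=\alpha_L(0)$ using the $\mathcal{T}_\alpha$-uniqueness from Theorem~\ref{theorem:existence}.
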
 
\begin{proof}
	The first estimate in \eqref{eq:estimates_local_coord} follows from Theorem~\ref{theorem:existence}, where $\epsilon_*(L) = O(1/L)$. For any given $L_0$, 
	\[
		\max_{|x| \leq L_0}|y(x) - y_L(x)| +|\alpha(x) - \alpha_L(x)| = O(\epsilon^2),
	\]
	so we need to compute the maxima only over the interval $[L_0, L]$.
		
	By Lemma~\ref{lemma:saddle_node_estimates} we know $y(x) = O(1/x)$, so for $x \in [L_0, L],$ $y(x) = O(1/L)$. Over the same interval $|y_L(x)| < |y(x)| = O(1/L)$, so the second inequality in \eqref{eq:estimates_local_coord} follows. By  \eqref{eq:center_manifold} $\alpha' = y$, $\alpha_L' = y_L'$, so the fourth inequality is identical to the second one. When we integrate it, we obtain $\max_{|x| \leq L}|\alpha(x) - \alpha_L(x)| = 
	O(\log L)$.
\end{proof}

\section{Conclusion and Future Work}
\label{section:conclusion}

The present work answers in the affirmative the question of existence and uniqueness of truncated contact defects in reaction-diffusion systems. A forthcoming paper will address the issue of spectral stability of the constructed solutions under the assumption that the contact defect on the whole line is spectrally stable: it turns out that $\mathcal{R}_0$-reversible truncated contact defects are spectrally stable when periodic boundary conditions are used, while reversible truncated contact defects are always spectrally unstable under Neumann boundary conditions, regardless of which of the two reversers $\mathcal{R}_{0,\pi}$ is present, since the eigenvalue corresponding to the approximate eigenfunction $\partial_x \textbf{u}_L$ becomes positive. These results will, in particular, explain why these defect pairwise attract each other. We believe that nonlinear stability of contact defects and their truncation are difficult to establish due to the logarithmically diverging phase correction. Already in the case of source defects (whose spectrum appears to be "nicer" than the spectrum of contact defects \citep[Figure~6.1]{sandstede2004defects}), the proof of nonlinear stability is highly nontrivial \citep{beck2014nonlinear}.

\begin{Acknowledgment}
Ivanov was partially supported by the NSF under grant DMS-1714429.
Sandstede was partially supported by the NSF under grant DMS-1714429 and DMS-2106566. Ivanov was partially supported by the Ministry of Education and Science of Bulgaria, Scientific Programme "Enhancing the Research Capacity in Mathematical Sciences (PIKOM)", No. DO1-67/05.05.2022. 
\end{Acknowledgment}

\bibliographystyle{abbrvnat}
\bibliography{bibliography}

\begin{thebibliography}{25}
\providecommand{\natexlab}[1]{#1}
\providecommand{\url}[1]{\texttt{#1}}
\expandafter\ifx\csname urlstyle\endcsname\relax
  \providecommand{\doi}[1]{doi: #1}\else
  \providecommand{\doi}{doi: \begingroup \urlstyle{rm}\Url}\fi

\bibitem[Beck et~al.(2014)Beck, Nguyen, Sandstede, and
  Zumbrun]{beck2014nonlinear}
M.~Beck, T.~T. Nguyen, B.~Sandstede, and K.~Zumbrun.
\newblock Nonlinear stability of source defects in the complex ginzburg--landau
  equation.
\newblock \emph{Nonlinearity}, 27\penalty0 (4):\penalty0 739, 2014.

\bibitem[Coppel(2006)]{coppel2006dichotomies}
W.~A. Coppel.
\newblock \emph{Dichotomies in stability theory}, volume 629.
\newblock Springer, 2006.

\bibitem[Doelman et~al.(2009)Doelman, Sandstede, Scheel, and
  Schneider]{doelman2009dynamics}
A.~Doelman, B.~Sandstede, A.~Scheel, and G.~Schneider.
\newblock \emph{The dynamics of modulated wave trains}.
\newblock American Mathematical Soc., 2009.

\bibitem[Fontich and Sardanyes(2007)]{fontich2007general}
E.~Fontich and J.~Sardanyes.
\newblock General scaling law in the saddle--node bifurcation: a complex phase
  space study.
\newblock \emph{Journal of Physics A: Mathematical and Theoretical},
  41\penalty0 (1):\penalty0 015102, 2007.

\bibitem[Guillemin and Pollack(2010)]{guillemin2010differential}
V.~Guillemin and A.~Pollack.
\newblock \emph{Differential topology}, volume 370.
\newblock American Mathematical Soc., 2010.

\bibitem[Henry(2006)]{henry2006geometric}
D.~Henry.
\newblock \emph{Geometric theory of semilinear parabolic equations}, volume
  840.
\newblock Springer, 2006.

\bibitem[Hirsch et~al.(2006)Hirsch, Pugh, and Shub]{hirsch2006invariant}
M.~W. Hirsch, C.~C. Pugh, and M.~Shub.
\newblock \emph{Invariant manifolds}, volume 583.
\newblock Springer, 2006.

\bibitem[Homburg and Sandstede(2010)]{HomburgSandstede}
A.~J. Homburg and B.~Sandstede.
\newblock Homoclinic and heteroclinic bifurcations in vector fields.
\newblock In H.~Broer, F.~Takens, and B.~Hasselblatt, editors, \emph{Handbook
  of {D}ynamical {S}ystems {III}}, pages 379--524. Elsevier, Amsterdam, 2010.

\bibitem[Ilyashenko and Yakovenko(1991)]{ilyashenko1991finitely}
Y.~S. Ilyashenko and S.~Y. Yakovenko.
\newblock Finitely smooth normal forms of local families of diffeomorphisms and
  vector fields.
\newblock \emph{Russian Math. Surveys}, 46\penalty0 (1):\penalty0 1--43, 1991.

\bibitem[Ivanov(2021)]{ivanov2021truncation}
M.~K. Ivanov.
\newblock \emph{Truncation of contact defects in reaction-diffusion systems}.
\newblock PhD thesis, Brown University, 2021.
\newblock URL
  \url{https://repository.library.brown.edu/studio/item/bdr:gut5c42r/}.

\bibitem[Kirchg{\"a}ssner(1988)]{kirchgassner1988nonlinearly}
K.~Kirchg{\"a}ssner.
\newblock Nonlinearly resonant surface waves and homoclinic bifurcation.
\newblock In \emph{Advances in applied mechanics}, volume~26, pages 135--181.
  Elsevier, 1988.

\bibitem[Kirchg{\"a}ssner and Scheurle(1981)]{kirchgassner1981bifurcation}
K.~Kirchg{\"a}ssner and J.~Scheurle.
\newblock Bifurcation of non-periodic solutions of some semilinear equations in
  unbounded domains.
\newblock In \emph{Surveys Ref. Works Math., Vol. 6 (Applications of Nonlinear
  Analysis in the Physical Sciences)}, pages 41--59. Pitman Boston, 1981.

\bibitem[Kuehn(2008)]{kuehn2008scaling}
C.~Kuehn.
\newblock Scaling of saddle-node bifurcations: degeneracies and rapid
  quantitative changes.
\newblock \emph{Journal of Physics A: Mathematical and Theoretical},
  42\penalty0 (4):\penalty0 045101, 2008.

\bibitem[Kuehn(2015)]{kuehn2015multiple}
C.~Kuehn.
\newblock \emph{Multiple time scale dynamics}, volume 191.
\newblock Springer, 2015.

\bibitem[Lin(1990)]{lin1990using}
X.-B. Lin.
\newblock Using melnikov's method to solve silnikov's problems.
\newblock In \emph{Proc. Roy. Soc. Edinburgh A}, volume 116, pages 295--325,
  1990.

\bibitem[Meiss(2007)]{meiss2007differential}
J.~D. Meiss.
\newblock \emph{Differential dynamical systems}.
\newblock SIAM, 2007.

\bibitem[Mielke(1996)]{mielke1996spatial}
A.~Mielke.
\newblock A spatial center manifold approach to steady state bifurcations from
  spatially periodic patterns.
\newblock In \emph{Dynamics of Nonlinear Waves in Dissipative Systems}, volume
  352 of \emph{Pitman Research Notes in Mathematics}, chapter~4. Longman, 1996.

\bibitem[Murray(2007)]{murray2007mathematical}
J.~D. Murray.
\newblock \emph{Mathematical biology: I. An introduction}, volume~17.
\newblock Springer Science \& Business Media, 2007.

\bibitem[Sandstede and Scheel(2001)]{sandstede2001structure}
B.~Sandstede and A.~Scheel.
\newblock On the structure of spectra of modulated travelling waves.
\newblock \emph{Mathematische Nachrichten}, 232\penalty0 (1):\penalty0 39--93,
  2001.

\bibitem[Sandstede and Scheel(2004{\natexlab{a}})]{sandstede2004defects}
B.~Sandstede and A.~Scheel.
\newblock Defects in oscillatory media: toward a classification.
\newblock \emph{SIAM Journal on Applied Dynamical Systems}, 3\penalty0
  (1):\penalty0 1--68, 2004{\natexlab{a}}.

\bibitem[Sandstede and Scheel(2004{\natexlab{b}})]{sandstede2004evans}
B.~Sandstede and A.~Scheel.
\newblock Evans function and blow-up methods in critical eigenvalue problems.
\newblock \emph{Discrete and Continuous Dynamical Systems}, 10\penalty0 (2004),
  2004{\natexlab{b}}.

\bibitem[Sandstede and Scheel(2007)]{sandstede2007period}
B.~Sandstede and A.~Scheel.
\newblock Period-doubling of spiral waves and defects.
\newblock \emph{SIAM Journal on Applied Dynamical Systems}, 6\penalty0
  (2):\penalty0 494--547, 2007.

\bibitem[Smoller(1983)]{smoller1983shock}
J.~Smoller.
\newblock \emph{Shock waves and reaction---diffusion equations}, volume 258.
\newblock Springer Science \& Business Media, 1983.

\bibitem[Turing(1952)]{turing1952chemical}
A.~M. Turing.
\newblock The chemical basis of morphogenesis.
\newblock \emph{Philosophical Transactions of the Royal Society of London.
  Series B, Biological Sciences}, 237\penalty0 (641):\penalty0 37--72, 1952.

\bibitem[Yoneyama et~al.(1995)Yoneyama, Fujii, and
  Maeda]{yoneyama1995wavelength}
M.~Yoneyama, A.~Fujii, and S.~Maeda.
\newblock Wavelength-doubled spiral fragments in photosensitive monolayers.
\newblock \emph{Journal of the American Chemical Society}, 117\penalty0
  (31):\penalty0 8188--8191, 1995.

\end{thebibliography}

\end{document}